\numberwithin{equation}{section}
\title{A Ramsey variant of the Brown-Erd\H{o}s-S\'os conjecture}
\author{Asaf Shapira \thanks{
School of Mathematics, Tel Aviv University, Tel Aviv 69978, Israel.
Email: asafico$@$tau.ac.il. Supported in part by ISF Grant 1028/16, ERC Consolidator Grant 863438 and NSF-BSF Grant 20196.}
\and
Mykhaylo Tyomkyn
	\thanks{Department of Applied Mathematics, Charles University, 11800 Prague, Czech Republic. Email: tyomkyn$@$kam.mff.cuni.cz. Supported in part by ERC Starting Grant 676632, ERC Synergy grant DYNASNET 810115 and the H2020-MSCA-RISE project CoSP- GA No. 823748.
}}
\date{\today}
\theoremstyle{plain}
\newtheorem{theorem}{Theorem}[section]
\newtheorem{lemma}[theorem]{Lemma}
\newtheorem{proposition}[theorem]{Proposition}
\newtheorem{problem}[theorem]{Problem}
\newtheorem{remark}[theorem]{Remark}
\newtheorem{definition}[theorem]{Definition}
\def\moverlay{\mathpalette\mov@rlay}
\def\mov@rlay#1#2{\leavevmode\vtop{%
   \baselineskip\z@skip \lineskiplimit-\maxdimen
   \ialign{\hfil$\m@th#1##$\hfil\cr#2\crcr}}}
\newcommand{\charfusion}[3][\mathord]{
    #1{\ifx#1\mathop\vphantom{#2}\fi
        \mathpalette\mov@rlay{#2\cr#3}
      }
    \ifx#1\mathop\expandafter\displaylimits\fi}
\newcommand{\cupdot}{\charfusion[\mathbin]{\cup}{\cdot}}
\newcommand{\bigcupdot}{\charfusion[\mathop]{\bigcup}{\cdot}}
\renewenvironment{proof}[1][\proofname]
{\par\pushQED{\qed}
	\normalfont\topsep6\p@\@plus6\p@\relax\trivlist
	\item[\hskip\labelsep\bfseries#1\@addpunct{.}]
	\ignorespaces}
{\popQED\endtrivlist\@endpefalse}
\newcommand{\C}{\mathcal C}
\newcommand{\E}{\mathcal E}
\newcommand{\F}{\mathcal F}
\newcommand{\G}{\mathcal G}
\newcommand{\T}{\mathcal T}
\newcommand{\eps}{\varepsilon}
\definecolor{RED}{rgb}{1,0,0}\definecolor{BLUE}{rgb}{0,0,1} 
\begin{document}
\date{}
\maketitle

\begin{abstract}

An $r$-uniform hypergraph ($r$-graph for short) is called linear if every pair of vertices belong to at
most one edge. A linear $r$-graph is complete if every pair of vertices are in exactly one edge. The famous
Brown-Erd\H{o}s-S\'os conjecture states that for every fixed $k$ and $r$, every linear $r$-graph with $\Omega(n^2)$ edges
contains $k$ edges spanned by at most $(r-2)k+3$ vertices.
As an intermediate step towards this conjecture, Conlon and Nenadov recently suggested to prove its natural Ramsey relaxation. Namely, that
for every fixed $k$, $r$ and $c$, in every $c$-colouring of a complete linear $r$-graph, one
can find $k$ monochromatic edges spanned by at most $(r-2)k+3$ vertices. We prove that this Ramsey version of the conjecture holds under the additional assumption that $r \geq r_0(c)$, and we show that for $c=2$ it holds for all $r\geq 4$.

\end{abstract}

\section{Introduction}\label{sec:intro}

The first result in extremal graph theory is probably Mantel's theorem stating that an $n$ vertex graph with more than
$n^2/4$ edges contains $3$ edges spanned by $3$ vertices, that is, a triangle.
This is of course just a special case of Tur\'an's theorem, one of the fundamental theorems in graph theory.
Tur\'an's theorem spurred an entire branch within graph theory of what is now called Tur\'an-type problems in graphs and hypergraphs \cite{Keevash},
as well as in other settings such as matrices and ordered graphs, see \cite{Tardos}.

One of the most notorious Tur\'an-type problems is a conjecture raised in the
early 70's by Brown, Erd\H{o}s and S\'os \cite{BES1,BES}. To state it we need a few definitions.
An $r$-uniform hypergraph ($r$-graph for short) ${\cal G}=(V,E)$ is composed of a vertex set $V$ and an edge set $E$ where every edge in $E$
contains precisely $r$ distinct vertices. An $r$-graph is \emph{linear} if every pair of vertices belong to at most one edge.
We call a set of $k$ edges spanned by at most $v$ vertices a $(v,k)$-configuration. Then the Brown--Erd\H{o}s--S\'os conjecture (BESC for short)
states that for every $k,r\geq 3$ and $\delta>0$ if $n \geq n_0 (k,r,\delta)$ then every linear $r$-graph on $n$ vertices with at least $\delta n^2$ edges
contains an $((r-2)k+3,k)$-configuration.

The simplest case of the BESC is when $r=k=3$. This special case was famously solved by
Ruzsa and Szemer\'edi \cite{RSz} and became known as the $(6,3)$-theorem.
To get a perspective on the importance of this theorem suffice it to say that the famous {\em triangle removal lemma} (see \cite{CFS2} for a survey) was devised in order to prove the $(6,3)$-theorem, that one of the first applications of Szemer\'edi's
regularity lemma \cite{Sz} was in \cite{RSz}, and that the $(6,3)$-theorem implies Roth's theorem \cite{Roth} on 3-term arithmetic progressions in dense sets of integers. Despite much effort the problem is wide open already for the next configuration, namely $(7,4)$.
As an indication of the difficulty of this case let us mention that it implies the notoriously difficult Szemer\'edi theorem \cite{SzThm0,SzThm} for $4$-term arithmetic progressions (see~\cite{Erd75}). Let us conclude this discussion by mentioning that the best result towards
the BESC was obtained 15 years ago by S\'ark\"ozy and Selkow \cite{Sarkozy_Selkow} who proved that
$f_3(n,k+2+\lfloor\log_2 k\rfloor,k)=o(n^2)$.\footnote{Here, $f_3(n,v,k)$ is the corresponding extremal number, i.e. the smallest $m$ such that every $3$-graph with $n$ vertices and $m$ edges contains a  $(v,k)$-configuration.} Since then, the only advancement was obtained by Solymosi and Solymosi \cite{SolymosiSolymosi} who improved the $f_3(n,15,10)=o(n^2)$ bound of \cite{Sarkozy_Selkow} to $f_3(n,14,10)=o(n^2)$. Conlon, Gishboliner, Levanzov and Shapira \cite{CGLS} have recently announced an improvement of the result of \cite{Sarkozy_Selkow} that replaces the $\log k$ term with $\log k/\log\log k$.

Given the difficulty of the BESC, researchers have recently looked at various relaxations of it.
For example, instead of looking at arbitrary $r$-graphs, one can look at those arising from a group, see \cite{Long,NST,Solym74,SolymosiWong,Wong}.
We will consider in this paper another relaxation of the BESC which was recently suggested independently by Conlon and Nenadov (private communications).
We say that a linear $r$-graph in {\em complete\footnote{Such an object is sometimes called an $r$-Steiner System (when $r=3$ this is a {\em Steiner Triple System}). Note that there are many non-isomorphic complete linear $r$-graphs on $n$ vertices.}} if every pair of vertices belong to exactly one edge.

\begin{problem}[Conlon, Nenadov]\label{p:BESR}
Prove that the following holds for every $r\geq 3$, $k\geq 3$, $c\geq 2$ and large enough $n \geq n_0(c,r,k)$: If ${\cal G}$ is an $n$-vertex
complete linear $r$-graph then in every $c$-colouring of its edges one can find $k$ edges of the same colour, which are spanned by at most $(r-2)k+3$ vertices.
\end{problem}

As we mentioned above, the BESC is a Tur\'an-type question, stating that enough edges force the appearance of certain configurations.
With this perspective in mind, Problem \ref{p:BESR} is its natural Ramsey weakening. Indeed, BESC implies its statement, as it gives the required monochromatic configuration in the most popular colour. The relation is analogous to the one between Szemer\'edi's theorem \cite{SzThm}
and Van der Waerden's theorem \cite{vdw}. In order to get a better feeling of this problem, we encourage the reader to convince themself of the folklore observation that Problem \ref{p:BESR} holds for $c=1$. A simple application of Ramsey's theorem also shows that Problem \ref{p:BESR} holds when $k=3$.

Our main result in this paper gives a positive answer to Problem \ref{p:BESR} assuming $r$ is large enough.
More precisely, we have the following.

\begin{theorem}\label{thm:maingeneral}
For every integer $c$ there exists $r_0=r_0(c)$ such that for every $r\geq r_0$ and integer $k\geq 3$ there exists $n_0=n_0(c,r,k)$ such that every $c$-colouring of a complete linear $r$-graph on $n>n_0$ vertices contains a monochromatic $((r-2)k+3,k)$-configuration.
\end{theorem}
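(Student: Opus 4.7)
My plan is to build the monochromatic $((r-2)k+3,k)$-configuration iteratively: start with a monochromatic ``triangle'' of three pairwise-meeting hyperedges (a $(3r-3,3)$-configuration) and then perform $k-3$ extension steps, each adding a new monochromatic hyperedge of the same colour that meets the current configuration in exactly two vertices lying in two different previous hyperedges. Each such extension contributes $r-2$ new vertices, so after $k-3$ steps the $k$ hyperedges span exactly $(r-2)k+3$ vertices.

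The starting triangle and a large pool of colour-compatible extension pairs will come from a single application of Ramsey's theorem. I $c$-colour the auxiliary complete graph $K_n$ by assigning to the pair $\{u,v\}$ the colour of the unique hyperedge of the complete linear $r$-graph containing it. For $n\ge R_2(T,c)$ this yields a monochromatic clique $K_T$, say red, with $T=T(c,r,k)$ arbitrarily large; three vertices $a,b,c\in K_T$ not all contained in a single hyperedge (a condition met by all but a negligible fraction of triples once $T\gg r$, by a union bound over hyperedges) give three distinct pairwise-meeting red hyperedges $e_{ab},e_{ac},e_{bc}$. For each subsequent extension, given a red configuration $C_i$, any pair $\{u,w\}\subseteq K_T\cap V(C_i)$ lying in two different hyperedges of $C_i$ is contained in a red hyperedge by the Ramsey property, and by linearity that hyperedge meets $V(C_i)$ in exactly two vertices from two different $C_i$-hyperedges, yielding a valid red extension.

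The main obstacle is that each extension step adds $r-2$ new vertices to $V(C_i)$ that may lie outside $K_T$, so $|V(C_i)\cap K_T|$ may grow slowly, threatening to exhaust the supply of ``red'' extension pairs once $i$ is large relative to $r$. The condition $r\ge r_0(c)$ should enter precisely here to keep the pool of red extension pairs large enough, uniformly in $i$: one natural route is to iterate Ramsey inside $K_T$ --- applying a secondary pigeonhole on the star-link partition of each new vertex (which partitions its link into parts of size at most $r-1$) to extract a refined monochromatic sub-clique at every step --- while absorbing the parameter losses into a growing $n_0(c,r,k)$. An alternative is a double-counting / supersaturation argument showing that among all monochromatic triangles whose vertex set lies mostly inside $K_T$, most simultaneously admit many red extensions, allowing one to greedily continue. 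Either way, the heart of the proof will be to show that with $r_0=r_0(c)$ depending only on $c$, this iterative refinement can be carried out $k-3$ times without exhausting the structure, for every $k\ge 3$; this is where the assumption $r\ge r_0(c)$ must bound the extension-pool losses by a factor depending on $c,r$ alone rather than on $k$.
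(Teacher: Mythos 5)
Your iterative extension scheme has the right target shape --- adding one hyperedge at a time that meets the current configuration in at least two vertices from two different hyperedges, producing what the paper calls an ``inductive'' configuration --- but there is a genuine gap already at the very first extension step. After extracting a monochromatic clique $K_T$ and choosing $a,b,c\in K_T$ with distinct $e_{ab},e_{ac},e_{bc}$, the set $K_T\cap V(\F_3)$ may consist of \emph{exactly} $\{a,b,c\}$: nothing forces the hyperedges $e_{ab},e_{ac},e_{bc}$ to contain a fourth vertex of $K_T$, and indeed a complete linear $r$-graph can have every hyperedge meeting $K_T$ in exactly two vertices. In that case the only pairs available in $K_T\cap V(\F_3)$ are $\{a,b\},\{a,c\},\{b,c\}$, each of which gives back a hyperedge already in $\F_3$, so the extension stalls at step one. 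The subsidiary claim that ``by linearity that hyperedge meets $V(C_i)$ in exactly two vertices'' is also inaccurate: linearity only gives at most one common vertex per hyperedge of $C_i$, so the new hyperedge can meet $V(C_i)$ in more vertices (harmless for the count, but not what you wrote), and when the chosen pair happens to lie inside a single hyperedge of $C_i$ it is not a new hyperedge at all.

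The iterated-Ramsey/star-link fix you sketch does not repair this, because pigeonhole on the link of a freshly introduced vertex $v$ against $K_T$ yields a large sub-clique matched to $v$ in some \emph{majority} colour, which need not be the colour of the configuration you are already growing; you have no way to force the colours of edges touching the $r-2$ new vertices to agree with the colour of $K_T$. The supersaturation alternative has the same missing control. This is exactly where the paper departs from a Ramsey-clique strategy altogether: it never looks for a monochromatic clique, but instead uses Ramsey multiplicity (Lemma~\ref{lem:ramseymult}, and Goodman's theorem for $c=2$) to select a single colour class $G$ with $\Omega(n^3)$ triangles, passes to the bowtie graph $B(\G)$ whose edges encode ${\cal C}^r_3$-configurations \emph{inside that one colour class}, shows $B$ has large average degree (close to $(r-1)^2$), and then grows the configuration via the dense-component machinery and the Type~1/Type~2 induction of Lemma~\ref{lem:main} --- all within one colour, which sidesteps the cross-colour control problem that sinks your proposal. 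Finally, in the paper $r\geq r_0(c)$ arises as $r_0=\lceil 6/\alpha_c\rceil+1$ to make the average-degree calculation in Lemma~\ref{lem:densecompsgeneral} close, not to bound Ramsey-type losses of an extension pool.
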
 

Note that even under assumptions of large uniformity it is unlikely that $((r-2)k+3,k)$ can ever be improved to $((r-2)k+2,k)$. Indeed, a conjecture by F\"uredi and Ruszink\'o~\cite{FR} states that for each $r\geq 3$ there exist arbitrarily large $r$-Steiner Systems without an $((r-2)k+2,k)$-configuration. That would preclude an extension of Theorem~\ref{thm:maingeneral} to $((r-2)k+2,k)$ even for $c=1$. The case $r=3$ of the F\"uredi-Ruszink\'o conjecture is an old conjecture by Erd\H{o}s~\cite{Erd73}, which was recently proved asymptotically, independently in~\cite{BW} and~\cite{GKLO}. 

In the important special case of $c=2$ we show that $r_0(2)$ can be chosen as small as $4$.
\begin{theorem}\label{thm:main}
For any integers $r\geq 4$ and $k\geq 3$ there exists $n_0=n_0(r,k)$ such that every $2$-colouring of a complete linear $r$-graph on $n>n_0$ vertices contains a monochromatic $((r-2)k+3,k)$-configuration.
\end{theorem}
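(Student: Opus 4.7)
My plan is to induct on $k$ and build the desired monochromatic configuration as a ``loose triangle plus $k-3$ two-attachment extensions'', after first reducing, via a Ramsey argument on pairs, to a large red substructure. Concretely, define an auxiliary $2$-colouring $\chi$ of $\binom{V}{2}$ by assigning to each pair the colour of the unique edge of~$\mathcal{G}$ containing it, and apply the Ramsey theorem for graphs to obtain, for $n$ large enough, a monochromatic clique $U\subseteq V$ of any prescribed size~$m$, which we may assume is red. The essential consequence is that \emph{every edge of~$\mathcal{G}$ meeting~$U$ in at least two vertices is red}.

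For the base case $k=3$, at most $O(r^{3}m^{2})$ triples of~$U$ lie in a single edge of~$\mathcal G$ (there are $\leq \binom{m}{2}$ edges meeting $U$ in $\geq 2$ vertices, each yielding $\leq\binom{r}{3}$ triples), whereas $\binom{m}{3}=\Theta(m^{3})$, so once $m\gg r$ we can pick $a,b,c\in U$ not all contained in any edge. The three edges of~$\mathcal{G}$ containing $\{a,b\},\{b,c\},\{a,c\}$ are then distinct, red, and pairwise share one vertex, giving a $(3r-3,3)$-configuration.

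For the inductive step from $k-1$ to~$k$, I extend the current red configuration $C=\{f_{1},\ldots,f_{k-1}\}$ on~$W$ by a red edge $f_{k}\notin C$ meeting $W$ in at least two vertices (necessarily in two distinct $f_{i},f_{j}\in C$ by linearity). Such $f_{k}$ contributes at most $r-2$ new vertices, so $C\cup\{f_{k}\}$ is a $((r-2)k+3,k)$-configuration. To produce~$f_{k}$, I select $x,y\in W\cap U$ lying in two different edges of~$C$ with $\{x,y\}$ not contained in any $f_{\ell}$: then the edge $f_{k}=f_{xy}$ of~$\mathcal{G}$ containing $\{x,y\}$ is red by the key property and distinct from every~$f_{\ell}$. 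Only $(k-1)\binom{r}{2}$ pairs of~$U$ are a priori forbidden by the containment condition, so the number of candidates is amply sufficient provided $|W\cap U|$ is large.

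The main obstacle, and where the restriction $r\geq 4$ is genuinely used, is guaranteeing a valid pair $(x,y)$ at every step, which requires $W\cap U$ to grow with~$k$; a priori an extension only reuses existing $U$-vertices and the reservoir need not grow. The workaround would be a structural dichotomy: either many edges of~$\mathcal{G}$ meet $U$ in at least three vertices---in which case an averaging argument lets us arrange each extension to bring in a fresh $U$-vertex, so that $|W\cap U|$ grows linearly in~$k$---or almost every edge of~$\mathcal{G}$ meeting $U$ in $\geq 2$ vertices meets it in exactly two. In this latter, highly structured ``bipartite'' regime each non-$U$ vertex~$w$ induces a matching $M_w$ in $K_U$ via the pairs whose edge contains~$w$, and a double-counting/pigeonhole argument locates two such hub vertices $w,w'$ whose matchings stitch together into a long alternating path in $K_U$; this immediately realises a $((r-2)k+3,k)$-configuration. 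The hypothesis $r\geq 4$ enters precisely here: $r-2\geq 2$ provides the non-$U$ slack in each edge needed to fit the two hubs alongside the matching skeleton.
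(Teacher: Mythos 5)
Your approach is genuinely different from the paper's. You apply graph Ramsey once to obtain a single monochromatic clique $U$ of constant size and then try to build the configuration inductively inside the red subhypergraph induced by $U$; the paper instead works globally, using a Ramsey-multiplicity bound (Goodman's theorem) to guarantee $\Omega(n^3)$ monochromatic triangles, encodes these into an auxiliary ``bowtie'' graph $B$ on pairs of intersecting red hyperedges, and then analyses the component structure of $B$. The reduction to one fixed clique is exactly where your proof loses the necessary density.

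The concrete gap is in the ``bipartite'' branch of your dichotomy. Consider a complete linear $r$-graph and a set $U$ of size $m$ such that every hyperedge meets $U$ in at most two vertices, and moreover the $(r-2)$-sets $e\setminus U$ for $e$ meeting $U$ in exactly two vertices are pairwise disjoint. (Linearity forces disjointness when the $U$-pairs share a vertex; when they are disjoint one can arrange disjointness as well, which is consistent with the hypergraph being a complete linear $r$-graph once $n$ is large.) Then every matching $M_w$ has size exactly $1$, so no two hubs $w,w'$ produce an alternating path of length more than $2$, and pigeonhole has nothing to bite on. Worse, no $((r-2)k+3,k)$-configuration exists at all among the hyperedges meeting $U$ in two vertices when $k\geq 4$: if $k$ such hyperedges span $t$ vertices of $U$, their pairwise disjoint non-$U$ parts already contribute $k(r-2)$ vertices, so $t\leq 3$ is forced, and three vertices of $U$ carry only $\binom{3}{2}=3$ pairs. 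Hence the configuration must use red hyperedges whose colour is \emph{not} certified by $U$, and your framework gives no handle on those. (Relatedly, the ``many triples'' branch also needs justification that edges with three $U$-vertices are reachable from the current $W\cap U$, and the stated role of $r\geq 4$ is unclear: the vertex count for the alternating path gives exactly $(r-2)k+3$ already at $r=3$.) In short, a single Ramsey clique of bounded size does not carry enough structure; the paper's use of Goodman's theorem to obtain many monochromatic triangles, and the subsequent component analysis of the bowtie graph, is doing real work that this proposal does not replicate.
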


While we believe that with some effort it should be possible to show that $r_0(2)=3$, it appears that completely removing
the assumption that $r$ is large enough as a function of $c$ would require a different approach. In particular, while
the case $k=3$ is an easy application of Ramsey's theorem, we do not know how to resolve Problem \ref{p:BESR} already
for $(c,r,k)=(3,3,4)$.

\subsection{Proof and paper overview}

The proof of Theorems \ref{thm:maingeneral} and \ref{thm:main} has two key ideas.
The first is to work with an auxiliary graph $B$ of ``bowties''. Every vertex $v$ in this graph
corresponds to a pair of intersecting\footnote{Since we only consider linear $r$-graphs, if two edges intersect, they intersect at a single vertex.
We will frequently use this fact throughout the paper.} edges of the $r$-graph ${\cal G}$. The graph $B$ contains edges
only between a vertex $b_1$, representing two intersecting edges $\{S_1,T\}$ of ${\cal G}$ and another vertex $b_2$,
representing two intersecting edges $\{S_2,T\}$ and only if the edges $S_1,S_2,T$ form a $(3r-3,r)$-configuration.
In Section \ref{sec:prelim} we will collect several preliminary observations regarding the graph $B$ and about edge-colourings of complete graphs. In Section~\ref{sec:mainproof}
we will prove our main results assuming $B$ has certain nice properties. This will reduce the proof to proving
Lemma \ref{lem:mainweak} which is the main technical part of the paper and is proved in Section \ref{sec:mainlemma}.
The second main idea of this paper is to define a somewhat subtle induction which will be used in order
to gradually ``grow'' $((r-2)k+3,k)$-configurations, for $k=3,4,\ldots$, and thus prove Lemma \ref{lem:mainweak}.
See Section \ref{sec:mainlemma} for an overview of this proof.

Perhaps one take-home message of this paper is that even when considering the Ramsey relaxation of the BESC (stated in Problem \ref{p:BESR}), and even after adding the assumption that $r \geq r_0(c)$, one still has to work quite hard in order to find the $((r-2)k+3,k)$-configurations of the BESC.

\paragraph*{Note added}
In the period when this paper was under review, Keevash and Long~\cite{KL} proved a density version of Theorem~\ref{thm:maingeneral} by applying (among other things) the notion of bow tie graphs which we introduce in this paper. 

\section{Preliminaries}\label{sec:prelim}

\subsection{Notation}
We use \emph{graph} in the standard meaning, i.e. referring to simple and undirected $2$-uniform graphs $G=(V,E)$, where $V$ is the set of \emph{vertices}, and $E\subseteq \binom{V}{2}$ are the \emph{edges} of $G$. We write $e(G)$ for $|E(G)|$.
We use the shorthand \emph{components} for connected components of a graph. For a vertex set $A\subseteq V$ we write $G[A]$ to denote the subgraph of $G$ induced on $A$, and similarly for disjoint vertex sets $A_1,\dots,A_m \subseteq V$ we use notation $G[A_1,\dots,A_m]$ for the induced multipartite subgraph between these sets.

We use \emph{$r$-graph} for $r$-uniform hypergraphs, denoted by script letters, i.e. $\G=(V,E)$, where $E\subseteq \binom{V}{r}$. We refer to $E$ as the set of \emph{hyperedges}. A hypergraph is \emph{linear} if no two hyperedges intersect in more than one vertex.
A \emph{complete linear $r$-graph} (also known as $r$-Steiner System) is a linear hypergraph corresponding to an edge-decomposition of a complete graph $K_n$ into copies of $K_r$. For a linear hypergraph $\G=(V,E(\G))$ the \emph{underlying graph} is the graph $G=(V,E(G))$, where $E(G)=\{e\in \binom{V}{2}: \exists e^+\in E(\G): e\subset e^+\}$. For instance, the underlying graph of a complete linear $r$-graph is always the complete graph.

For integers $v$ and $k$ we define a \emph{$(v,k)$-configuration} to be a hypergraph on $k$ hyperedges spanned by at most $v$ vertices.

We write $d_G(v)$, $d_{\G}(v)$ for degree of the vertex $v$ in the graph $G$ or hypergraph $\G$, respectively. Similarly $d_{avg}(G), d_{avg}(\G)$ denote the average degree in a graph $G$ or hypergraph $\G$. For a graph $G$ denote by $\T(G)$ the $3$-graph of triangles in $G$, and by $T(G)$ the number of triangles in $G$, that is $T(G)=|E(\T(G))|$.

We conclude by observing that for every $r \geq 2$ there is a unique (up to isomorphism) linear $r$-graph consisting of
$3$ edges on at most $3r-3$ vertices. For example, when $r=2$ this is a triangle, and the case $r=4$ is depicted in Figure 1.
For every $r$, we will use ${\cal C}^r_3$ to denote this unique configuration.

\subsection{The auxiliary graph $B$}

For $r\geq 3$, given a linear $r$-graph $\G$, define $B=B(\G)$, the \emph{bowtie graph} of $\G$, to be the following auxiliary graph. The vertices of $B$ will be \emph{bowties} of $\G$; that is, each vertex $b\in V(B)$ is a set of the form $b=\{S,T\}$ where $S,T\in E(\G)$ with $S\cap T=\{u\}$ for some vertex $u\in V(\G)$. We say that $u$ is the \emph{centre} of the bowtie $b$.
The edge set of $B$ is defined by
\begin{equation}\label{defB}
E(B)=\{b_1,b_2\in B: b_1=\{S_1,T\}, b_2=\{S_2,T\}, |S_1\cap S_2|=1, |S_1\cap S_2\cap T|=0 \}\;,
\end{equation}
see Figure 1 for an illustration.

We shall now state some basic properties of the graph $B$.

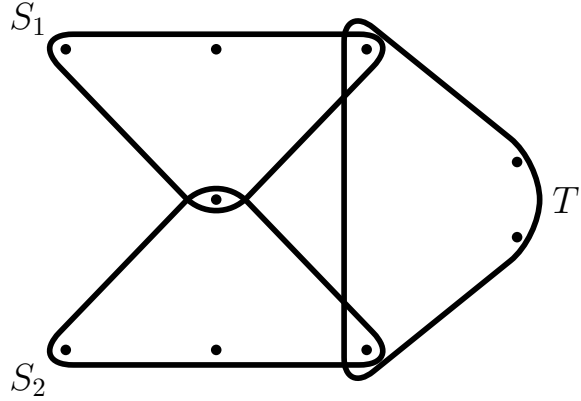
\begin{figure}\label{figure:63config}
\centering

	\begin{tikzpicture}
	\coordinate (v1) at (0,0);
	\coordinate (v2) at (0,2);
	\coordinate (v3) at (2,2);
	\coordinate (v4) at (-2,2);
	\coordinate (v5) at (0,-2);
	\coordinate (v6) at (2,-2);
	\coordinate (v7) at (-2,-2);
	\coordinate (v8) at (4,0.5);
	\coordinate (v9) at (4,-0.5);
	
	\foreach \i in {1,2,3,4,5,6,7,8,9}
	{
		\draw (v\i) node[fill=black,circle,minimum size=4pt,inner sep=0pt] {};
	}
	
	\draw [rounded corners = 6mm, line width = 0.75mm] (0,-0.4) -- (2.5,2.2) -- (-2.5,2.2) --cycle;
	\draw [rounded corners = 6mm, line width = 0.75mm] (0,0.4) -- (2.5,-2.2) -- (-2.5,-2.2) --cycle;
	\draw [rounded corners = 5mm, line width = 0.75mm] (1.7,2.6) -- (4.3,0.5) -- (4.3,-0.5) -- (1.7,-2.6) -- cycle;
	
	\draw (-2.5,2.4) node {\Large$S_1$};
	\draw (-2.5,-2.4) node {\Large$S_2$};
	\draw (4.65,0) node {\Large$T$};
	\end{tikzpicture}
\caption{An illustration of ${\cal C}^4_3$, the unique $(9,3)$-configuration in a linear $4$-graph. In $B$, such a configuration is represented by a triangle on the vertices $b_1=\{S_1,T\}$, $b_2=\{S_2,T\}$ and $b_3=\{S_1,S_2\}$.}
\end{figure}

\begin{proposition}\label{prop:B}
Suppose that $\G=(V,E(\G))$ is a linear $r$-graph, $G$ is the underlying graph, and $B=B(\G)$. Then the following statements hold.
\begin{enumerate}
\item[(1)] For any pair of bowties $b_1=\{S_1,T\}$ and $b_2=\{S_2,T\}$ with $\{b_1,b_2\}\in E(B)$ we have that $\{S_1,S_2,T\}$ is a
copy of ${\cal C}^r_3$, and their centres, given by $S_1\cap T=:\{u_2\}$, $S_2\cap T=:\{u_2\}$, and $S_1 \cap S_2=:\{u_3\}$ form a triangle in $G$.
\item[(2)] Every triangle $\{u,v,w\}\subset V(G)$ which is not contained in a hyperedge of $\G$ uniquely defines a copy of ${\cal C}^r_3$ composed of the three edges $\{Q,S,T\}\subset E(\G)$, where $\{u,v\}\subset Q, \{u,w\}\subset S$, and $\{v,w\}\subset T$. The bowties $\{Q,S\},\{Q,T\}$ and $\{S,T\}$ form a triangle in $B$.
\item[(3)] All degrees in $B$ are even, and $\Delta(B)\leq 2(r-1)^2$.
\item[(4)] For every vertex $u\in V(\G)$ the set $B_u=\{b\in B: u \text{ is a centre of } b\}$ is independent in $B$.
\item[(5)] With the above notation, for every $u\in V(\G)$,
$$
\sum_{b\in B_u}d_B(b)=2(d_{\T(G)}(u)-\binom{r-1}{2}d_\G(u))\;.
$$
\end{enumerate}
\end{proposition}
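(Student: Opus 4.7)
\textbf{Parts~(1) and~(2)} are direct definition-chasing. For~(1), the bowtie conditions $|S_1\cap T|=|S_2\cap T|=1$ combined with $|S_1\cap S_2|=1$ and $|S_1\cap S_2\cap T|=0$ from~\eqref{defB} show that the three pairwise intersections of $\{S_1,S_2,T\}$ are singletons at three distinct vertices, which is precisely the configuration $\C^r_3$; the three intersection vertices are pairwise joined by $T,S_1,S_2$, so they form a triangle in $G$. For~(2), linearity of $\G$ yields unique hyperedges $Q\supset\{u,v\}$, $S\supset\{u,w\}$, $T\supset\{v,w\}$, which must be distinct because $\{u,v,w\}$ lies in no hyperedge; linearity then forces $Q\cap S=\{u\}$, $Q\cap T=\{v\}$, $S\cap T=\{w\}$, so $\{Q,S,T\}$ is a copy of $\C^r_3$. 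That $\{Q,S\},\{Q,T\},\{S,T\}$ form a triangle in $B$ is a direct check against~\eqref{defB}, using $u\notin T$, $v\notin S$, $w\notin Q$ (else $\{u,v,w\}$ would lie in a single hyperedge).

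\textbf{Parts (3) and (4).} For evenness in~(3), fix $b=\{S,T\}$ and note that by~(1) every neighbour of $b$ in $B$ corresponds to a $\C^r_3$ containing $\{S,T\}$, which is determined by its third hyperedge $R$. Each such $R$ produces exactly two neighbours of $b$, namely $\{S,R\}$ and $\{T,R\}$ (both verified directly against~\eqref{defB}), so $d_B(b)$ is twice the number of admissible $R$'s and hence even. For $\Delta(B)\leq 2(r-1)^2$, I would split the neighbours of $b$ according to whether they share $S$ or $T$ with $b$; a neighbour sharing $T$ has the form $\{S',T\}$ where $S'$ meets $S\setminus\{u\}$ and $T\setminus\{u\}$ each in a single vertex, and by linearity $S'$ is uniquely determined by such a pair, yielding at most $(r-1)^2$ neighbours on each side. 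For~(4), if $b_i=\{S_i,T\}\in B_u$ were adjacent in $B$, the centre conditions $S_i\cap T=\{u\}$ would give $u\in S_1\cap S_2\cap T$, contradicting $|S_1\cap S_2\cap T|=0$.

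\textbf{Part~(5)}, which I expect to be the most delicate step, is proved by double counting. By~(4), no edge of $B$ lies inside $B_u$, so $\sum_{b\in B_u}d_B(b)$ equals the number of ordered pairs $(b_1,b_2)$ with $b_1\in B_u$ and $\{b_1,b_2\}\in E(B)$. Parts~(1) and~(2) together show that the edges of $B$ are organised into triangles, one per $\C^r_3$ in $\G$, and that the centres of the three bowties in such a triangle are the three vertices of a triangle in $G$ not contained in any hyperedge. Hence only $\C^r_3$'s whose associated $G$-triangle passes through $u$ contribute, and each such configuration contributes exactly one vertex to $B_u$ (the bowtie centred at $u$) together with the two edges of its $B$-triangle incident to that vertex, i.e.\ $2$ to the sum. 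The triangles of $G$ through $u$ which \emph{do} lie inside a hyperedge are indexed by pairs $(S,\{v,w\})$ with $S\ni u$ a hyperedge and $\{v,w\}\subset S\setminus\{u\}$ (the containing hyperedge being unique by linearity), so there are $\binom{r-1}{2}d_\G(u)$ of them, and the number of relevant triangles through $u$ is $d_{\T(G)}(u)-\binom{r-1}{2}d_\G(u)$; the claimed identity follows. The main care needed is in this last step, to check that each relevant $\C^r_3$ is counted with multiplicity exactly $2$ (and no edge is double-counted), for which~(4) is crucial.
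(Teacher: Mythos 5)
Your proof is correct and takes essentially the same approach as the paper's: inclusion--exclusion for (1) and (2), counting the admissible third hyperedges $R$ through a fixed bowtie for (3), deriving a contradiction with the condition $|S_1\cap S_2\cap T|=0$ for (4), and for (5) double-counting via the $\C^r_3$-induced triangle decomposition of $E(B)$ together with the observation that the $G$-triangles through $u$ inside a hyperedge number $\binom{r-1}{2}d_\G(u)$. The only cosmetic deviation is in (3), where you bound neighbours separately by whether they share $S$ or $T$, while the paper counts the $(r-1)^2$ candidate vertex pairs directly; both give the same bound.
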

\begin{proof}
If $b_1$ and $b_2$ are as in (1), then, by inclusion-exclusion,
$$
|S_1\cup S_2\cup T|=r+r+r-1-1-1+0=3r-3\;,
$$
implying that $S_1,S_2,T$ form a copy of ${\cal C}^r_3$.
Moreover, $\{u_1,u_2,u_3\}$ form a triangle in $G$, as $\{u_1,u_2\}\subset T\in E(\G)$, $\{u_1,u_3\}\subset S_1\in E(\G)$, and $\{u_2,u_3\}\subset S_2\in E(\G)$. This shows (1).

Property (2) is similar. If $u,v,w$ is a triangle, then, by definition of $G$, every pair of these vertices belong to an edge of ${\cal G}$. The
assumption that the triple $\{u,v,w\}$ does not belong to an edge of ${\cal G}$ implies that there are three distinct edges $Q,S,T$ each containing
two of these vertices. The assumption that ${\cal G}$ is linear and the fact that each two of these edges intersect in exactly one of the vertices $\{u,v,w\}$, implies that they have no vertex in common.
As in the previous paragraph this means that $|Q \cup S \cup T|=3r-3$ so $Q,S,T$ form a copy of ${\cal C}^r_3$. These observations
and the definition of $B$ in (\ref{defB}) also guarantee that the bowties $\{Q,S\},\{Q,T\}$ and $\{S,T\}$ form a triangle in $B$.
	
For a bowtie $b=\{S,T\}\in B$ there are $(r-1)^2$ vertex pairs in $S\cup T$  not contained in $S$ or in $T$. Hence, $b$ is contained in at most $(r-1)^2$ possible copies of ${\cal C}^r_3$ on edges $\{Q,S,T\}$, each of which would give rise to two neighbours of $b$ in $B$: the bowties $\{Q,S\}$ and $\{Q,T\}$. Hence, $d_B(b)$ is even and at most $2(r-1)^2$, establishing (3).
	
Property (4) is follows from (1) and the fact that every three bowties from $B_u$ have $u$ as the unique common vertex, thus cannot form a ${\cal C}^r_3$.
	
As for property (5), by (1) and (2), the sum $\sum_{b\in B_u}d_B(b)$ equals twice the number of triangles in $G$ that contain $u$ and are not contained in a hyperedge of $\G$. Observing that the remaining triangles containing $u$ are partitioned into sets of size $\binom{r-1}{2}$ by the hyperedges of $\G$ yields the desired statement.
\end{proof}

\begin{remark}
Note that item (1) above implies that the edges of $B$ have a ``natural'' triangle decomposition in which the edge connecting the vertices $b_1=\{S_1,T\}$ and $b_2=\{S_1,T\}$ is put in the triangle spanned by the vertices $b_1,b_2$ and $b_3=\{S_1,S_2\}$.
This means that an equivalent way to define $B$ is to put, for every
${\cal C}^r_3$ consisting of edges $S_1,S_2,T$, a triangle on the vertices $b_1,b_2,b_3$. It is also worth noting
that these triangles are not the only triangles in $B$, that is, not all triangles in $B$ correspond to
a copy of ${\cal C}^r_3$ in $B$. For example, when $r=3$ one can take a ${\cal C}^3_3$ consisting of edges $A,B,C,$ and then add another edge $D$ on the vertices of degree $1$ (this is known as the Pasch-configuration). Then every three of these four edges forms a ${\cal C}^r_3$ hence
the three bowties $\{A,D\}$, $\{B,D\}$, $\{C,D\}$ form a triangle in $B$ although they do not form a ${\cal C}^r_3$.
\end{remark}

The following lemma establishes a connection between large connected components in $B$ and $((r-2)k+3,k)$-configurations in $\G$. The below constant of $r^{10k^2}$ is rather generous (it can be easily made polynomial in $k$), but chosen so to streamline the proof. This has no overall impact on the strength of Theorems~\ref{thm:maingeneral} and~\ref{thm:main}.
\begin{lemma}\label{lem:smallcomps}
If $B=B(\G)$ has a component of size at least $r^{10k^2}$, then $\G$ contains an $((r-2)k+3,k)$-configuration.
\end{lemma}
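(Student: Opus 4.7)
The plan is to extract a long path from the large component of $B$ and walk along it while tracking the set of $\G$-edges encountered. The key insight is that traversing an edge of $B$ adds at most one new $\G$-edge to this set, and that new edge, being part of a $\mathcal{C}^r_3$ with the previous bowtie, contributes at most $r-2$ new vertices to the running vertex count; so as soon as the running set has $k$ distinct $\G$-edges we have exactly a $((r-2)k+3,k)$-configuration.

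First I would invoke the degree bound $\Delta(B)\leq 2(r-1)^2$ from Proposition~\ref{prop:B}(3). Any connected graph on $N$ vertices with maximum degree $\Delta$ has diameter at least $\log N/\log\Delta - O(1)$, because a BFS-ball of radius $d$ contains at most $O(\Delta^d)$ vertices. Applied with $N\geq r^{10k^2}$ and $\Delta\leq 2(r-1)^2$, this yields a path $b_0,b_1,\ldots,b_m$ in the component whose length $m$ is comfortably larger than $k^2$; the hypothesis on $N$ leaves enormous slack, so no tight calculation is needed.

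Second, I would set $\F_j=b_0\cup\cdots\cup b_j\subseteq E(\G)$ and $t_j=|\F_j|$. Since the $b_j$ are distinct $2$-subsets of $\F_m$, a long path forces $\F_m$ to be large: if $t_m\leq k-1$ there would be only $\binom{k-1}{2}$ possible bowties on these edges, contradicting $m+1>\binom{k-1}{2}$. Because each $b_j$ shares a $\G$-edge with $b_{j-1}$, the sequence $t_j$ satisfies $t_j-t_{j-1}\in\{0,1\}$, so there is a first index $j^{\star}$ with $t_{j^{\star}}=k$.

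Third, and this is the main step, I would prove by induction on $j$ that $|V(\F_j)|\leq (r-2)\,t_j+3$. The base case is $|V(\F_0)|=2r-1=(r-2)\cdot 2+3$. For the inductive step, when $b_j$ introduces a new $\G$-edge $T'$, Proposition~\ref{prop:B}(1) applied to the $B$-edge $\{b_{j-1},b_j\}$ says that $b_{j-1}\cup\{T'\}$ forms a copy of $\mathcal{C}^r_3$; hence $T'$ meets the vertex set of $b_{j-1}$, which lies in $V(\F_{j-1})$, in exactly $2$ vertices, so $T'$ contributes at most $r-2$ new vertices. Evaluating at $j=j^{\star}$ gives $k$ $\G$-edges spanning at most $(r-2)k+3$ vertices, the desired configuration. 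The only real care needed is in verifying this inductive bound on $|V(\F_j)|$; once that is in hand, everything else is pigeonhole on the path.
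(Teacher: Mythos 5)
Your proposal is correct and follows essentially the same route as the paper: extract a long simple path in the big component using the degree bound $\Delta(B)\leq 2(r-1)^2$, track the set of hyperedges encountered, show by induction that it always spans at most $(r-2)t_j+3$ vertices (the $\mathcal{C}^r_3$ structure from Proposition~\ref{prop:B}(1) gives the $r-2$ new vertices per step), lower-bound the final count of hyperedges by a pigeonhole on distinct bowties, and stop at the first index where exactly $k$ hyperedges appear. The only cosmetic difference is that the paper writes $p\leq |E(\F_p)|^2$ rather than your $\binom{t_m}{2}$ bound, and it phrases the path extraction via the longest path rather than diameter; the substance is identical.
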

\begin{proof}
Suppose that $B$ has a component of size at least $r^{10k^2}$. Since by Proposition~\ref{prop:B}(3), $\Delta(B)\leq 2(r-1)^2$, it follows that $B$ contains a path of length $k^2$ (as $\Delta(B)^{ {k^2}}<r^{10k^2}$). Let $b_1b_2\dots b_p$ be the longest path in $B$, where the vertices are numbered along the path; by the above we can assume that $p\geq k^2$.

For each $1\leq i\leq p$, consider the  hypergraph $\F_i$ of all hyperedges belonging to one of the bowties $b_1,\dots,b_i$. Since $\F_1=b_1$, we have that $\F_1$ is a  $(2(r-2)+3,2)$-configuration. Suppose now that for some $i<p$, the hypergraph $\F_i$ is an $((r-2)m_i+3,m_i)$-configuration, where $m_i=e(\F_i)$, and consider $\F_{i+1}$. Since $b_{i+1}$ and $b_i$ are adjacent in $B$, by Proposition~\ref{prop:B}(1), we can write $b_i=\{Q,S\}$ and
$b_{i+1}=\{S,T\}$, where $\{Q,S,T\}$ is a ${\cal C}^r_3$ in $\G$, and note that $Q, S\in E(\F_i)$. Regarding $T$ we have the following two options:
	
\textbf{Case 1}: $T\in E(\F_{i})$. Then $\F_{i+1}=\F_i$, $m_{i+1}=m_i$, and $\F_{i+1}$ is an $((r-2)m_{i+1}+3,m_{i+1})$-configuration.
	
\textbf{Case 2:} $T \notin E(\F_i)$. Then $m_{i+1}=m_i+1$, and, since $|T\setminus V(\F_i)|\leq |T\setminus (Q\cup S)|=r-2$, we have
$$|V(\F_{i+1})|\leq (r-2)m_i+3+(r-2)=(r-2)(m_i+1)+3,$$
so $\F_{i+1}$ is an $((r-2)m_{i+1}+3,m_{i+1})$-configuration.
	
Furthermore, since $\{b_1,\dots b_p\}\subseteq V(B(\F_p))$, we have
$$k^2 \leq p\leq |B(\F_p)|\leq |E(\F_p)|^2,$$
implying $|E(\F_p)|\geq k$.
Since in each step $m_i$ increased by at most $1$, by the discrete intermediate value theorem, for some $i\leq p$ the hypergraph $\F_i\subseteq \G$ is an $((r-2)k+3,k)$-configuration.
\end{proof}

\subsection{Ramsey multiplicity}
We shall need a Ramsey multiplicity bound for $c$-edge colourings of the complete graph. Recall that $T(G)$ denotes the number of triangles in $G$.
\begin{lemma}\label{lem:ramseymult}
	For every $c\geq 3$ there exist $n_0=n_0(c) $ and $\alpha_c>0$, such that for all $n>n_0$ in every $c$-colouring of the complete graph: $E(K_n)=\bigcupdot_{i=1}^cG_i$
	there will be a colour class $G_i$ satisfying	
	$$T(G_i)\geq\alpha_c\sum_{u\in K_n}\binom{d_{G_i}(u)}{2} \ \text{ and } \ \sum_{u\in K_n}\binom{d_{G_i}(u)}{2}\geq \alpha_c n^3\;.
	$$
\end{lemma}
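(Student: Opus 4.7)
My approach is to find a colour class containing $\Omega(n^3)$ monochromatic triangles, and then derive both inequalities essentially for free. I rely on two elementary observations. First, every monochromatic triangle in a graph $H$ contributes one pair of adjacent $H$-edges at each of its three vertices, and these three pairs are distinct; hence $\sum_u \binom{d_H(u)}{2} \geq 3 T(H)$. Second, trivially $\sum_u \binom{d_H(u)}{2} \leq n\binom{n-1}{2} \leq n^3/2$. So if I can locate a colour class $G_i$ with $T(G_i) \geq \gamma n^3$, the upper bound yields $T(G_i) \geq 2\gamma \sum_u \binom{d_{G_i}(u)}{2}$, while the lower bound yields $\sum_u \binom{d_{G_i}(u)}{2} \geq 3\gamma n^3$, and setting $\alpha_c := 2\gamma$ ensures both inequalities of the lemma hold simultaneously for the same colour class.

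The substantive step is therefore a Ramsey--supersaturation bound: for every $c$ and every sufficiently large $n$, any $c$-colouring of $K_n$ contains at least $\gamma_c n^3$ monochromatic triangles, for some $\gamma_c = \gamma_c(c) > 0$. For this I would let $R_c := R(\underbrace{3,\dots,3}_c)$ denote the multicoloured Ramsey number for triangles, which is finite for every $c \geq 1$. A standard double-counting argument then suffices: every $R_c$-vertex subset of $V(K_n)$ spans at least one monochromatic triangle by definition of $R_c$, and each monochromatic triangle of $K_n$ lies in exactly $\binom{n-3}{R_c-3}$ such subsets; therefore the total number of monochromatic triangles in $K_n$ is at least $\binom{n}{R_c}/\binom{n-3}{R_c-3} = \binom{n}{3}/\binom{R_c}{3}$, which exceeds $\gamma_c n^3$ for $n$ large. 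Pigeonholing over the $c$ colour classes produces an index $i$ with $T(G_i) \geq (\gamma_c/c) n^3$, and applying the two observations from the previous paragraph to this colour completes the argument.

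I do not anticipate any serious obstacle: the proof essentially reduces to Ramsey's theorem combined with a supersaturation count and two one-line inequalities. The only point that requires attention is that both conclusions must be witnessed by the \emph{same} colour class, but this is automatic in the approach above since both bounds are derived from the single lower estimate $T(G_i) \geq \gamma n^3$ on one colour.
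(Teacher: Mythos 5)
Your proof is correct and follows essentially the same route as the paper. Both arguments start from the standard Ramsey-multiplicity fact that some colour class $G_i$ contains $\Omega_c(n^3)$ triangles (the paper cites this as standard; you prove it via the clean double-counting argument over $R_c$-subsets), and both then obtain the two required inequalities by comparing $T(G_i)$ with the cherry count $S_i = \sum_u \binom{d_{G_i}(u)}{2}$ using $3T(G_i) \le S_i$ and a trivial cubic upper bound on $S_i$ (you use $S_i \le n^3/2$; the paper uses the marginally sharper $S_i \le \binom{n}{3} + 2T(G_i)$, but the distinction is immaterial since only the existence of a positive $\alpha_c$ is needed).
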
	
\begin{proof}
	Suppose that for some integer $n>0$ we have an edge colouring $E(K_n)=\bigcupdot_{i=1}^cG_i$. By a standard fact from Ramsey theory, there exists a constant $1\geq \rho_c>0$ such that for large enough $n$ and some $1\leq i\leq c$ we will have
	\begin{equation}\label{eq:ramseymult}
	T(G_i)\geq \rho_c\binom{n}{3}.
	\end{equation}
	Write
	$$S_i:=\sum_{u\in K_n}\binom{d_{G_i}(u)}{2}, \ 
	$$
	and note that $S_i$ counts the `cherries' (copies of $K_{1,2}$) in $G_i$.
	Observe that, evaluating $S_i$ via cherries of colour $i$ in all possible vertex triples, we count $3$ for each triangle and at most $1$ for each of the remaining triples, resulting 
	in
	$$3T(G_i) \leq S_i\leq \binom{n}{3}+2T(G_i)\;.
	$$
	Therefore,
	$$\frac{T(G_i)}{S_i}\geq\frac{T(G_i)}{\binom{n}{3}+2T(G_i)}\geq \frac{\rho_c}{1+2\rho_c}=:3\alpha_c>\alpha_c\;.
	$$
	Here we used~\eqref{eq:ramseymult} and the fact that for all $a,x>0$ the function $\frac{x}{a+2x}$ is monotone increasing in $x$.
	Furthermore,
	$$S_i\geq 3T(G_i)\stackrel{\eqref{eq:ramseymult}}{\geq} 3\rho_c\binom{n}{3}>\frac{\rho_c}{3}n^3>\alpha_c n^3\;.
	$$
\end{proof}

\noindent
For $c=2$, we can apply Goodman's theorem~\cite{Gd} to prove a stronger version of the above statement. This will be needed in the proof of Theorem~\ref{thm:main}.
\begin{lemma}\label{lem:Goodman}
	For every $\eps>0$ there exists $n_0=n_0(\eps)$ such that for all $n>n_0$ in every $2$-colouring of the complete graph $K_n$
	there will be a colour class $G$ satisfying
	$$T(G)\geq (\frac{1}{6}-\eps)\sum_{u\in K_n}\binom{d_G(u)}{2} \ \text{ and } \ \sum_{u\in K_n}\binom{d_G(u)}{2}\geq \frac{n^3}{10^6}\;.
	$$
\end{lemma}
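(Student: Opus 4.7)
The plan is to derive the lemma from Goodman's theorem, which guarantees that for any 2-edge-colouring $E(K_n)=G_1\cupdot G_2$, the total number of monochromatic triangles $M:=T(G_1)+T(G_2)$ satisfies $M\geq (1-o(1))\binom{n}{3}/4$. As in the proof of Lemma~\ref{lem:ramseymult}, double-counting cherries in each triple (a monochromatic triple contributes $3$, a bichromatic one contributes exactly $1$) yields the identity
\[
S_1+S_2=2M+\binom{n}{3},\qquad\text{where } S_i:=\sum_{u\in K_n}\binom{d_{G_i}(u)}{2}.
\]

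I would then argue by contradiction: assume that no colour $i\in\{1,2\}$ satisfies both $T(G_i)/S_i\geq 1/6-\eps$ and $S_i\geq n^3/10^6$. The argument splits into two cases depending on the $S_i$. If both $S_1,S_2\geq n^3/10^6$, then for each $i$ the ratio condition must fail, so $T(G_i)<(1/6-\eps)S_i$. Summing over $i$ and substituting the identity yields $M<(1/6-\eps)(2M+\binom{n}{3})$, equivalently
\[
M<\frac{1-6\eps}{4+12\eps}\binom{n}{3},
\]
which is strictly smaller than $\binom{n}{3}/4$ for fixed $\eps>0$, contradicting Goodman's theorem for $n$ sufficiently large.

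Otherwise, without loss of generality $S_2<n^3/10^6$. Then $T(G_2)\leq S_2/3$ is negligible, so $T(G_1)\geq M-n^3/(3\cdot 10^6)\geq (1-o(1))\binom{n}{3}/4$. Letting $N$ denote the number of triples with exactly two edges of colour $1$, we have $S_1=3T(G_1)+N$ and $N\leq\binom{n}{3}-M\leq (3/4+o(1))\binom{n}{3}$. Therefore $N/T(G_1)\leq 3+o(1)$, which gives
\[
\frac{T(G_1)}{S_1}=\frac{1}{3+N/T(G_1)}\geq \frac{1}{6}-o(1),
\]
exceeding $1/6-\eps$ for $n$ large. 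Moreover $S_1\geq 3T(G_1)\gg n^3/10^6$. Thus colour $1$ satisfies both required inequalities, contradicting the assumption.

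The delicate step is the second case: Goodman's theorem controls only the total $M$, so a naive application might single out a colour whose cherry count $S_i$ happens to be tiny, and in that regime the ratio $T(G_i)/S_i$ could be arbitrary. The key point is that $S_2$ being small forces $T(G_1)$ to be close to $\binom{n}{3}/4$, which combined with the trivial bound $N\leq\binom{n}{3}-M$ pins $T(G_1)/S_1$ close to $1/6$ from above; without this observation one cannot simultaneously control the ratio and the size of the chosen colour class.
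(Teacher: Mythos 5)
Your Case 1 is correct and mirrors the mediant-inequality step of the paper's proof. Case 2, however, has a genuine gap: the error terms you label ``$o(1)$'' are in fact fixed positive constants of order $10^{-6}$, and the argument breaks down for $\eps$ below roughly $10^{-6}$, whereas the lemma must hold for every $\eps>0$.

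Concretely, in Case 2 you lower-bound $T(G_1)$ by $M - T(G_2) \geq M - S_2/3$, and $S_2$ can be as large as $n^3/10^6$. Tracking this through the ratio, the requirement $T(G_1)\geq(1/6-\eps)S_1 = (1/6-\eps)(3T(G_1)+N)$ is equivalent to $(3+18\eps)T(G_1)\geq(1-6\eps)N$, and using $T(G_1)\geq M-S_2/3$ together with $N\leq\binom{n}{3}-M$ reduces it to
\[
(4+12\eps)M \;\geq\; (1-6\eps)\binom{n}{3} + (1+6\eps)S_2.
\]
Even plugging in the sharp Goodman bound $M\geq\binom{n}{3}/4 - O(n^2)$, this becomes (for large $n$) $\;9\eps\binom{n}{3} \gtrsim (1+6\eps)S_2$, which can fail when $S_2 \approx n^3/10^6$ and $\eps \ll 10^{-6}$. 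So you only obtain $T(G_1)/S_1 \geq 1/6 - c$ for a fixed $c>0$, not $1/6-\eps$ for arbitrarily small $\eps$. Since downstream (Setup $A_2(\eps,r)$, Lemma~\ref{lem:manydensecomps}) the lemma is invoked with $\eps(r,k)=1/(14r^4 r^{10k^2})$, which is far below $10^{-6}$, the gap matters.

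The paper avoids this by not passing through $M-T(G_2)$ at all in the sparse case. If $\sum_u\binom{d_G(u)}{2}<10^{-6}n^3$ then convexity forces $d_{avg}(G)<n/100$, hence $d_{avg}(H)>98n/100$, and convexity applied to $H$ gives $S\geq n\binom{98n/100}{2}$. Combining $S=\binom{n}{3}+2T(G)+2T(H)$ with $T(G)\leq S_G < 10^{-6}n^3$ yields a numeric lower bound $T(H) > \frac16 n\binom{n}{2}\geq \frac16\sum_u\binom{d_H(u)}{2}$ --- that is, the ratio for $H$ is at least $1/6$ \emph{exactly}, with no $\eps$-loss, so the argument works for every $\eps>0$. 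You could salvage your proof by replacing the $T(G_1)\geq M-S_2/3$ step with this density-based lower bound on $T(G_1)$; as written, however, Case 2 does not close.
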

\begin{proof}
	We can assume that $1/100>\eps>0$, and suppose that for some integer $n>0$ we have an edge colouring $E(K_n)=G\cupdot H$.
	
	As in the standard proof of Goodman's theorem, and similarly to the proof of Lemma~\ref{lem:ramseymult}, we count the copies of $K_{1,2}$ in $G$ and in $H$ around each vertex, and take the sums, obtaining the quantity
	\begin{equation}\label{eq:Goodman1}
	S:=\sum_{u\in K_n} \binom{d_G(u)}{2}+\binom{d_H(u)}{2}.
	\end{equation}
	By convexity in~\eqref{eq:Goodman1}, we have
	\begin{equation}\label{eq:convex}
	S\geq 
	2n\binom{\frac{n-1}{2}}{2}.
	\end{equation}
	On the other hand, evaluating $S$ via monochromatic cherries in all possible vertex triples, we count $3$ for each monochromatic triangle and $1$ for each of the remaining triples, resulting in
	\begin{equation}\label{eq:Goodman2}
	S=\binom{n}{3}+2T(G)+2T(H).
	\end{equation}
	Combining~\eqref{eq:convex} and~\eqref{eq:Goodman2},
	for sufficiently large $n$ we obtain
	$
	T(G)+T(H)\geq (\frac{1}{4}-\eps)\binom{n}{3}
	$ -- this is Goodman's theorem in its approximate form.
	
	In particular, for large $n$ we will have
	$$\frac{T(G)+T(H)}{S}\stackrel{\eqref{eq:Goodman2}}{=}\frac{T(G)+T(H)}{\binom{n}{3}+2(T(G)+T(H))}\geq \frac{1/4-\eps}{1+2/4-2\eps}\geq\frac{1}{6}-\eps\;,
	$$
	where in the first inequality we used the fact that, for $a,x>0$, the function $\frac{x}{a+2x}$ is monotone increasing in $x$.
	It follows by~\eqref{eq:Goodman1} that for large $n$ we have
	$$T(G)+T(H)\geq (\frac{1}{6}-\eps)\left(\sum_{u\in K_n} \binom{d_G(u)}{2}+\sum_{u\in K_n} \binom{d_H(u)}{2}\right)\;.
	$$
	Using the fact that any $a,b,c,d>0$ satisfy $\frac{a+b}{c+d}\leq \max\{\frac{a}{c},\frac{b}{d}\}$, we deduce that one of the colour classes, say $G$, will satisfy
	\begin{equation}\label{eq:Gisgoodman}
	T(G)\geq (\frac{1}{6}-\eps)\sum_{u\in K_n}\binom{d_G(u)}{2}.
	\end{equation}
	Suppose now that $G$ fails to satisfy the second requirement, i.e. $\sum_{u\in K_n}\binom{d_G(u)}{2}< 10^{-6}n^3$.
	In that case, by convexity,
	$$n\binom{d_{avg}(G)}{2}\leq \sum_{u\in K_n}\binom{d_G(u)}{2}< 10^{-6}n^3\;.
	$$ So, $d_{avg}(G)<n/100$, which implies $d_{avg}(H)>98n/100$, and, applying convexity again, we obtain
	\begin{equation}\label{eq:sconvex}
	S\geq \sum_{u\in K_n}\binom{d_H(u)}{2}\geq n  \binom{d_{avg}(H)}{2}\geq n  \binom{98n/100}{2}\;.
	\end{equation}

	By~\eqref{eq:Goodman2},~\eqref{eq:sconvex}, and the fact that $T(G)\leq \sum_{u\in K_n}\binom{d_G(u)}{2}< 10^{-6}n^3$, 
	we have
	\begin{align*}
	T(H)
	&\stackrel{\eqref{eq:Goodman2},\eqref{eq:sconvex}}{\geq} \frac{1}{2}( n \binom{98n/100}{2}-\binom{n}{3}-2T(G))\\
	&>\frac{n}{2} \binom{98n/100}{2}-\frac{1}{2}\binom{n}{3}-10^{-6}n^3\\
	&>\frac{1}{6}n\cdot \binom{n}{2}\geq \frac{1}{6}\sum_{u\in K_n}\binom{d_H(u)}{2}\;,
	\end{align*}

	so in this case $H$ satisfies the requirements of the lemma, with room to spare.
\end{proof}
\section{Structure of the auxiliary graph}\label{sec:mainproof}
\subsection{Main proof of Theorems~\ref{thm:maingeneral} and~\ref{thm:main}}
In order to streamline the statements of the upcoming lemmas, we shall formulate two setups that correspond to the settings of Theorems~\ref{thm:maingeneral} and~\ref{thm:main}, respectively.

\medskip
\noindent
\textbf{Setup $A_c(r)$} [$c,r\geq 3$ integers]. Suppose that $n>\max\{r/\alpha_c^2, n_0(c)\}$, where $n_0$ and $\alpha_c$ are as defined in Lemma~\ref{lem:ramseymult}.  Suppose we have a set $U$ of $n$ vertices, and a $c$-coloured complete linear $r$-graph on $U$, with colour classes $\G_1,\dots,\G_c$. Denote by $G_1,\dots,G_c$ the underlying graphs of $\G_1,\dots,\G_c$, respectively. So, $\bigcupdot_{1\leq i\leq c} E(G_i)=\binom{U}{2}$, and the conditions of Lemma~\ref{lem:ramseymult} hold. Suppose that $G_i$ is the graph satisfying the assertion of Lemma~\ref{lem:ramseymult}. Write $G=G_i$ and $\G=\G_i$, and let $B=B(\G)$.

\medskip
\noindent
\textbf{Setup $A_2(\eps,r)$} [$\eps>0$; $r\geq 3$ integer]. Suppose that $n>\max\{10^6 r/\eps, n_0(\eps)\}$, where $n_0$ is as defined in Lemma~\ref{lem:Goodman}. Suppose we have a set $U$ of $n$ vertices, and a $2$-coloured complete linear $r$-graph on $U$, with colour classes $\G$ and $\mathcal{H}$. Denote by $G$ and $H$ the underlying graphs of $\G$ and $\mathcal{H}$. So, $E(G)\cupdot E(H)=\binom{U}{2}$, and the conditions of Lemma~\ref{lem:Goodman} hold. Suppose that $G$ satisfies the assertion of Lemma~\ref{lem:Goodman}, and let $B=B(\G)$.

\medskip
The next two lemmas give in the above setups lower bounds on the order and density of $B$. The proof of Lemma~\ref{lem:9minuseps} will be given in the next subsection.

\begin{lemma}\label{lem:basic}
	For any $c\geq 3$ and $r\geq 3$ in Setup $A_c(r)$ we have
	\begin{equation}\label{eq:sizeofB}
|B|=\sum_{u\in U}\binom{\frac{d_G(u)}{r-1}}{2}\geq \frac{\alpha_c}{2r^2}n^3.
\end{equation}
	For any $\eps>0$ and $r\geq 4$ in Setup $A_2(\eps,r)$ we have
	\begin{equation}\label{eq:sizeofB2}
|B|=\sum_{u\in U}\binom{\frac{d_G(u)}{r-1}}{2}\geq \frac{n^3}{2\cdot 10^{6}r^2}.
\end{equation}
\end{lemma}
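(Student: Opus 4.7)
The plan is to convert the cherry bound provided by Lemma~\ref{lem:ramseymult} (resp.~Lemma~\ref{lem:Goodman}) on the underlying graph $G$ into a lower bound on $|B|$, via the identity $d_\G(u)=d_G(u)/(r-1)$ and a one-line algebraic rearrangement of $\binom{y/(r-1)}{2}$ in terms of $\binom{y}{2}$.

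First I would prove the stated identity $|B|=\sum_{u\in U}\binom{d_G(u)/(r-1)}{2}$. Every bowtie of $\G$ has a unique centre, so $V(B)$ is partitioned by the sets $B_u$, and each $B_u$ consists of the $\binom{d_\G(u)}{2}$ unordered pairs of hyperedges through $u$. Linearity of $\G$ forces any two hyperedges through $u$ to share only the vertex $u$, so each of the $d_\G(u)$ hyperedges through $u$ contributes exactly $r-1$ distinct neighbours of $u$ in $G$. This gives $d_G(u)=(r-1)\,d_\G(u)$ and hence the first equality of the lemma.

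Next I would carry out the algebraic translation. Expanding,
\[
\binom{y/(r-1)}{2} = \frac{1}{(r-1)^2}\binom{y}{2} - \frac{r-2}{2(r-1)^2}\,y,
\]
so summing over $u\in U$ yields
\[
|B| = \frac{1}{(r-1)^2}\sum_{u\in U}\binom{d_G(u)}{2} - \frac{r-2}{2(r-1)^2}\sum_{u\in U}d_G(u).
\]
In Setup $A_c(r)$, I would substitute the bound $\sum_u\binom{d_G(u)}{2}\geq \alpha_c n^3$ from Lemma~\ref{lem:ramseymult} together with the trivial $\sum_u d_G(u)\leq n^2$. Using the elementary inequality $\tfrac{1}{(r-1)^2}-\tfrac{1}{2r^2}\geq \tfrac{1}{2(r-1)^2}$ (valid for $r\geq 3$), the desired inequality $|B|\geq \alpha_c n^3/(2r^2)$ reduces to $\alpha_c n\geq r-2$, which is comfortably implied by the Setup's hypothesis $n>r/\alpha_c^2$. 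The bound under Setup $A_2(\eps,r)$ is obtained by exactly the same computation with $\alpha_c$ replaced by $10^{-6}$ from Lemma~\ref{lem:Goodman} and the hypothesis $n>10^6 r/\eps$ absorbing the linear correction.

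I do not anticipate a genuine obstacle: the lemma is essentially bookkeeping that validates the route the paper takes, namely reducing the search for bowties in $\G$ to counting monochromatic cherries in the underlying graph $G$. The only mild subtlety is the linear correction term $\tfrac{r-2}{2(r-1)^2}\sum_u d_G(u)$, which is non-negligible for a graph with many low-degree vertices but is easily absorbed by the hypothesis $n\gg r/\alpha_c^2$ (resp.\ $n\gg 10^6 r/\eps$); verifying this is a one-line check and introduces no real difficulty.
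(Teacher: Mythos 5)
Your proof is correct and takes essentially the same approach as the paper's: both establish $|B|=\sum_u\binom{d_\G(u)}{2}$ via the centre correspondence and then pass to the cherry count in $G$, using the lower bound on $n$ in the setup to absorb the linear error term. You merely spell out the algebraic comparison that the paper dismisses as ``an immediate consequence.''
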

\begin{proof}
	A vertex $u\in U$ with $d_\G(u)=m$ satisfies $d_G(u)=(r-1)m$, and is centre to $\binom{m}{2}$ bowties. Therefore, in Setup $A_c(r)$ we have \eqref{eq:sizeofB} as an immediate consequence of Lemma~\ref{lem:ramseymult}, and in Setup $A_2(\eps,r)$ we have \eqref{eq:sizeofB2} as a consequence of Lemma~\ref{lem:Goodman}.
\end{proof}
\begin{lemma}\label{lem:9minuseps}
	For every $c\geq 3$ and $r\geq 3$ in Setup $A_c(r)$ we have
	$$d_{avg}(B)\geq \alpha_cr^2\;.
	$$
	For every $\eps>0$, in Setup $A_2({\eps},r)$ we have
	$$d_{avg}(B)\geq (r-1)^2-7{r^2}\eps\;.$$
\end{lemma}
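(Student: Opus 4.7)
The plan is to compute $2e(B)=\sum_b d_B(b)$ exactly via Proposition~\ref{prop:B}(5) and then divide by the upper bound on $|B|$ implicit in Lemma~\ref{lem:basic}. The starting observation is that each bowtie $b=\{S,T\}$ has a unique centre $S\cap T$, so $\{B_u\}_{u\in U}$ partitions $V(B)$. Summing the identity from Proposition~\ref{prop:B}(5) over all $u$, and using $\sum_u d_{\T(G)}(u)=3T(G)$ together with $\sum_u d_\G(u)=r\,e(\G)=2e(G)/(r-1)$ (the last equality because $e(G)=\binom{r}{2}e(\G)$ in a linear $r$-graph), gives the clean identity
\[
2e(B)\;=\;6T(G)\,-\,2(r-2)e(G).
\]

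Next I would relate $|B|$ to the sum $\sum_u\binom{d_G(u)}{2}$ appearing in Lemmas~\ref{lem:ramseymult} and~\ref{lem:Goodman}. Using $d_G(u)=(r-1)d_\G(u)$ together with $|B|=\sum_u\binom{d_\G(u)}{2}$, a short algebraic manipulation yields
\[
(r-1)^2\,|B|\;=\;\sum_{u}\tbinom{d_G(u)}{2}\,-\,(r-2)e(G),
\]
and in particular $(r-1)^2|B|\le\sum_u\binom{d_G(u)}{2}$. Combining this with the previous display produces the master inequality
\[
d_{avg}(B)\;=\;\frac{2e(B)}{|B|}\;\ge\;(r-1)^2\cdot\frac{6T(G)-2(r-2)e(G)}{\sum_u\binom{d_G(u)}{2}}.
\]

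All that is left is to feed in the Ramsey multiplicity estimates and dispatch the error term. In Setup~$A_c(r)$, Lemma~\ref{lem:ramseymult} provides $6T(G)/\sum_u\binom{d_G(u)}{2}\ge 6\alpha_c$, while the error ratio is at most $2(r-2)e(G)/(\alpha_c n^3)<r/(\alpha_c n)<\alpha_c$ by the hypothesis $n>r/\alpha_c^2$. This leaves $d_{avg}(B)\ge 5\alpha_c(r-1)^2\ge\alpha_c r^2$ for all $r\ge 3$. In Setup~$A_2(\eps,r)$, Lemma~\ref{lem:Goodman} replaces the constant $6\alpha_c$ by $6(1/6-\eps)=1-6\eps$, and the error ratio is at most $rn^2/(n^3/10^6)=10^6r/n<\eps$ by $n>10^6r/\eps$; hence $d_{avg}(B)\ge(r-1)^2(1-7\eps)\ge(r-1)^2-7r^2\eps$, as desired.

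The only real obstacle is bookkeeping the lower-order term $(r-2)e(G)$, which appears both in the exact formula for $e(B)$ and in the conversion from $\sum_u\binom{d_G(u)}{2}$ to $|B|$; the $n$-thresholds built into the two setups are chosen precisely so that this term is absorbed into the error. Everything else is an application of the already-proved identities of Proposition~\ref{prop:B} and the Ramsey multiplicity bounds of Lemmas~\ref{lem:ramseymult} and~\ref{lem:Goodman}.
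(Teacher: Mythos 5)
Your argument is correct and is essentially the paper's proof, just organized slightly more crisply: you sum Proposition~\ref{prop:B}(5) to get the exact identity $2e(B)=6T(G)-2(r-2)e(G)$, you replace the paper's one-line convexity inequality $\sum_u\binom{d_G(u)}{2}\ge (r-1)^2\sum_u\binom{d_G(u)/(r-1)}{2}$ by the exact identity $(r-1)^2|B|=\sum_u\binom{d_G(u)}{2}-(r-2)e(G)$ (which implies it), and then feed in Lemmas~\ref{lem:ramseymult} and~\ref{lem:Goodman} together with the $n$-thresholds of the two setups exactly as the paper does. No gaps; the calculations and the final constants match.
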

\noindent
Next, we study the connected components of $B$. To this end, the following definition will be of central importance.
\begin{definition}
For any linear $r$-graph $\G$ and the associated bowtie graph $B=B(\G)$ call a component of $B$ \emph{dense} if its average degree is at least $3(r-1)$.	
\end{definition}
The next two lemmas claim that in the appropriate setups $B$ will have either a big component, or a large number of dense components. Their proofs will be given in the next subsection.
\begin{lemma}\label{lem:densecompsgeneral}
	For any $c\geq 3$ there exists $r_0=r_0(c)$ such that for all $r\geq r_0$ the following holds in Setup $A_c(r)$. For every $k\geq 3$ there exists 
	$\beta=\beta(c,r,k)>0$ such that 
	either $B$ has a component of size at least $r^{10k^2}$, or
	$B$ has at least $\beta n^3$ dense components.
\end{lemma}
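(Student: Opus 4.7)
The plan is to do a simple edge-counting argument in $B$, using the lower bounds on $|B|$ and $d_{avg}(B)$ supplied by Lemmas~\ref{lem:basic} and~\ref{lem:9minuseps}, and then cover $E(B)$ by components in two buckets: dense and non-dense.

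First I would assume that $B$ has no component of size at least $r^{10k^2}$, as otherwise we are done. Combining Lemmas~\ref{lem:basic} and~\ref{lem:9minuseps} gives
\[
e(B)\;=\;\tfrac12\,d_{avg}(B)\,|B|\;\geq\;\tfrac12\cdot\alpha_cr^2\cdot\tfrac{\alpha_c}{2r^2}n^3\;=\;\tfrac{\alpha_c^2}{4}n^3.
\]
Next I would estimate, from above, the contribution to $e(B)$ coming from the non-dense components. By definition, a non-dense component $C$ satisfies $e(C)<\tfrac{3(r-1)}{2}|C|$, so summing over all non-dense components gives a contribution of at most $\tfrac{3(r-1)}{2}|B|$. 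Using the trivial bound $d_\G(u)\leq (n-1)/(r-1)$ at every vertex, one has $|B|=\sum_u\binom{d_\G(u)}{2}\leq \tfrac{n^3}{2(r-1)^2}$, so the non-dense edge contribution is at most $\tfrac{3n^3}{4(r-1)}$.

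Now I would pick $r_0(c)$ large enough (essentially $r_0 \geq 1 + 6/\alpha_c^2$) so that $\tfrac{3n^3}{4(r-1)}\leq \tfrac{\alpha_c^2}{8}n^3$. Then, for every $r\geq r_0$, the dense components of $B$ account for at least $\tfrac{\alpha_c^2}{8}n^3$ edges. Under our assumption every component has at most $r^{10k^2}$ vertices, and therefore at most $\binom{r^{10k^2}}{2}\leq r^{20k^2}$ edges. Consequently the number of dense components is at least
\[
\frac{\alpha_c^2 n^3/8}{r^{20k^2}}\;=:\;\beta(c,r,k)\,n^3,
\]
as required.

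I do not see a genuine obstacle here; everything follows from counting once Lemmas~\ref{lem:basic} and~\ref{lem:9minuseps} are in hand. The only delicate point is to make sure the dependence of $r_0$ on $c$ (and not on $k$) is correct: the threshold is forced only by the comparison of the absolute edge count of $B$ with the worst-case non-dense contribution, both of which depend only on $c$ and $r$, so the choice $r_0=r_0(c)$ is legitimate, while $k$ enters only through the crude upper bound $r^{20k^2}$ on $e(C)$ used to convert "many edges in dense components" into "many dense components", and this is absorbed in $\beta$.
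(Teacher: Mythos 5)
Your proof is correct, but the accounting is organized differently from the paper's. Both proofs feed off Lemmas~\ref{lem:basic} and~\ref{lem:9minuseps} and split the components of $B$ into dense and non-dense; the difference is in what is counted. The paper works entirely with vertex counts: it writes $d_{avg}(B)$ as a $|C|/|B|$-weighted average of the $d_{avg}(C)$ over components, upper-bounds the dense terms by $\Delta(B)\le 2(r-1)^2$ (Proposition~\ref{prop:B}(3)) and the non-dense terms by $3r$, and rearranges to get $|B_1|>|B|/r$, where $B_1$ is the union of dense components; then it applies the lower bound on $|B|$ from Lemma~\ref{lem:basic} and divides by the vertex bound $r^{10k^2}$. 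You instead count edges: you compute $e(B)=\tfrac12 d_{avg}(B)|B|\ge \tfrac{\alpha_c^2}{4}n^3$, cap the contribution of non-dense components via $e(C)<\tfrac{3(r-1)}{2}|C|$ together with the upper bound $|B|\le n^3/(2(r-1)^2)$ (coming from $d_\G(u)\le (n-1)/(r-1)$), and divide the surviving dense-edge mass by the per-component edge cap $\binom{r^{10k^2}}{2}$. Your route replaces the paper's use of the maximum-degree bound on $B$ by an upper bound on $|B|$, which is perhaps more elementary, but it yields a slightly larger $r_0$ (order $1/\alpha_c^2$ rather than the paper's order $1/\alpha_c$) and a slightly smaller $\beta$ (denominator $r^{20k^2}$ rather than $r^{10k^2+3}$); neither loss matters, since the lemma only requires $r_0$ to depend on $c$ and $\beta$ to be positive. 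All your intermediate estimates check out, and the choice of $r_0$ is legitimately $k$-free as you observe.
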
	
\begin{lemma}\label{lem:manydensecomps}
	For any $r\geq 4$ and $k\geq 3$ there exists $\eps=\eps(r,k)>0$ such that in Setup $A_2(\eps,r)$ either $B$ has a component of size at least $r^{10k^2}$, or $B$ has at least $r^{-30k^2}n^3$ dense components.
\end{lemma}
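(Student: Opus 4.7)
The plan is to argue by contrapositive: assume $B$ has no component of size $\geq s:=r^{10k^2}$, and deduce that $B$ has at least $r^{-30k^2}n^3$ dense components. Via Lemmas~\ref{lem:basic} and~\ref{lem:9minuseps} together with Proposition~\ref{prop:B}(3), I would record the three inputs
$$|B|\geq \frac{n^3}{2\cdot 10^6 r^2},\qquad d_{avg}(B)\geq (r-1)^2 - 7r^2\eps,\qquad \Delta(B)\leq 2(r-1)^2.$$
Then split the components of $B$ into \emph{dense} (average degree $\geq 3(r-1)$) and \emph{sparse}, with vertex totals $V_D$ and $V_S=|B|-V_D$. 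Because each component has size at most $s$, the number of dense components is at least $V_D/s$, so it is enough to produce a macroscopic lower bound on $V_D$.

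For $r\geq 5$ the straightforward double-count works: the edges inside sparse components total at most $3(r-1)V_S/2$ and those in dense ones at most $(r-1)^2 V_D$ (using $\Delta(B)\leq 2(r-1)^2$); combining with $e(B)\geq d_{avg}(B)|B|/2$ and rearranging yields
$$V_D\;\geq\;\frac{(r-1)(r-4)-7r^2\eps}{(r-1)(2r-5)}\,|B|.$$
Since $(r-1)(r-4)\geq 4$ when $r\geq 5$, choosing $\eps=\eps(r,k)$ small enough (e.g.\ $\eps\ll (r-4)/r^2$) gives $V_D\geq c(r)|B|$, which after division by $s$ comfortably exceeds $r^{-30k^2}n^3$.

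The main obstacle is the case $r=4$, where $(r-1)(r-4)$ vanishes and Lemma~\ref{lem:9minuseps} only delivers $d_{avg}(B)\geq 9-112\eps$, a hair below the dense threshold $3(r-1)=9$. I would close the gap with an integrality trick: for any sparse component $C$, the inequality $e(C)<9|C|/2$ together with $e(C)\in\mathbb{Z}$ forces a deficit $9|C|/2-e(C)\geq 1/2$ (if $|C|$ is odd then $9|C|/2$ is a half-integer, and if even then $e(C)\leq 9|C|/2-1$). Since there are at least $V_S/s$ sparse components, one has
$$E_-:=\sum_{C\text{ sparse}}\!\Bigl(\tfrac{9|C|}{2}-e(C)\Bigr)\;\geq\;\frac{V_S}{2s}.$$
Setting also $E_+:=\sum_{C\text{ dense}}(e(C)-9|C|/2)$, the identity $E_+-E_-=e(B)-9|B|/2$ combined with the edge-count bound gives $E_+\geq E_- - 56\eps|B|$, while $\Delta(B)\leq 18$ forces $E_+\leq 9V_D/2$. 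Substituting $V_S=|B|-V_D$ and taking $\eps\leq 1/(224\,s)$ (so $\eps(4,k)$ is on the order of $r^{-10k^2}$) yields $V_D\geq |B|/(20s)$, and hence at least $|B|/(20s^2)\geq r^{-30k^2}n^3$ dense components for $k\geq 3$. The heart of the proof is therefore the $r=4$ boundary, where the slack between $d_{avg}(B)$ and the dense threshold has been squeezed to zero and the required density must be squeezed out of the discrete $1/2$ gap separating the sparse and dense regimes.
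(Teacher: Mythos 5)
Your argument is correct, and it reaches the same conclusion by a slightly different route than the paper. The paper handles all $r\geq 4$ uniformly: it replaces the dense threshold $3(r-1)$ with the integer $(r-1)^2$ (which exceeds $3(r-1)$ precisely when $r\geq 4$, so any component of average degree $\geq (r-1)^2$ is automatically dense), and exploits the fact that for a component $C$ of size at most $f(r,k)$ the quantity $d_{avg}(C)=2e(C)/|C|$ is a rational with denominator at most $f(r,k)$; thus any component with $d_{avg}(C)<(r-1)^2$ actually satisfies $d_{avg}(C)\leq (r-1)^2-1/f(r,k)$, and choosing $\eps$ of order $1/(r^4f(r,k))$ forces a $\Omega(1/(r^2 f(r,k)))$-fraction of $|B|$ to lie in such high-degree (hence dense) components. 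Your version isolates the genuine difficulty at $r=4$ and uses a bare-hands double count for $r\geq 5$, where $(r-1)^2-3(r-1)=(r-1)(r-4)$ is bounded away from zero and no integrality is needed; for $r=4$ you recover essentially the same integrality gain, stated in terms of $e(C)\in\mathbb{Z}$ giving a deficit $9|C|/2-e(C)\geq 1/2$ per sparse component. Both viewpoints are valid; the paper's single-threshold formulation is a bit slicker (no case split), while your formulation makes it transparent that $r=4$ is the only boundary case and that the integrality gap is what rescues it, which is a useful conceptual remark. One minor point: you write $e(B)\geq d_{avg}(B)|B|/2$, which is of course an identity; what you actually use is the lower bound on $d_{avg}(B)$ from Lemma~\ref{lem:9minuseps}, and the final numerical check (that your constants stay above $r^{-30k^2}n^3$ once $\eps$ is chosen on the order of $r^{-10k^2}$) does go through for $r\geq 4$, $k\geq 3$.
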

When $B$ has many dense components, we can arrange some of them in a particularly helpful way.
\begin{lemma}\label{lem:overlapcomps}
	For every $r\geq 3$, $k\geq 3$, $\beta>0$
	and $n>2rk\cdot r^{10k^2}\beta^{-1}$ the following holds. Suppose that $|U|=n$ and $\G$ is a linear $r$-graph on $U$ such that its bowtie graph $B=B(\G)$ has at least $\beta n^3$ dense components, each of which has at most $r^{10k^2}$ vertices.
	
	Then there exist a vertex $u_0\in V(\G)$, a hyperedge $T_0\in E(\G)$ with $u_0\in T_0$, a set of $2rk$ further hyperedges $\T^0=\{T^0_1,\ldots ,T^0_{2rk}\}\subseteq E(\G)$, and a set of distinct dense components of $B$, $\C=\{C^1,\ldots, C^{2rk}\}$ such that, for each $\ell$, we have $u_0\in T_\ell^0$ and $\{T_0,T^0_\ell\}\in C^\ell$.
\end{lemma}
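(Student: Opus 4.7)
The plan is a direct pigeonhole argument on vertex--hyperedge incidence pairs of $\G$, exploiting the linearity of $\G$ to bound the number of such pairs and the assumed supply of $\beta n^3$ dense components. Concretely, for each dense component $C$ of $B$ I would pick (arbitrarily) a representative bowtie $b_C=\{S_C,T_C\}\in V(C)$ with (unique) centre $u_C$, and fix the convention of associating to $C$ the single incidence pair $(u_C,T_C)$; note that $u_C\in T_C$ by the definition of the centre.

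Next, linearity of $\G$ yields $|E(\G)|\le\binom{n}{2}/\binom{r}{2}$, and so the total number of incidence pairs $(u,T)$ with $u\in T\in E(\G)$ is exactly $r\cdot|E(\G)|\le n(n-1)/(r-1)<n^2$. By pigeonhole, some incidence pair $(u_0,T_0)$ is the assignment of at least $\beta n^3/n^2=\beta n$ dense components, and the hypothesis $n>2rk\cdot r^{10k^2}\beta^{-1}$ gives $\beta n>2rk$. I thus select $2rk$ pairwise distinct dense components $C^1,\dots,C^{2rk}$ all carrying the same assignment $(u_0,T_0)$. For each $\ell$ the representative bowtie of $C^\ell$ is then of the form $b_{C^\ell}=\{T_0,T^0_\ell\}$ with centre $u_0$, which immediately gives $u_0\in T^0_\ell$ and $\{T_0,T^0_\ell\}\in C^\ell$. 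Since the bowties $b_{C^\ell}$ lie in pairwise distinct components of $B$ they are distinct vertices of $B$, forcing the hyperedges $T^0_1,\dots,T^0_{2rk}$ to be pairwise distinct and all distinct from $T_0$, as required.

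I do not expect any substantive obstacle: this is a one-step averaging lemma and the hypothesis on $n$ is very comfortable. The only delicate point is the bookkeeping choice of which of the two hyperedges of $b_C$ is called $T_C$ (rather than $S_C$), so that the pigeonhole has a well-defined single-valued domain. Incidentally, the upper bound $r^{10k^2}$ on the size of each dense component is not actually used in this argument; it presumably reflects the setting in which the lemma is invoked elsewhere in the paper (namely, the dichotomy coming from Lemmas \ref{lem:densecompsgeneral} and \ref{lem:manydensecomps}, where the ``large component'' alternative has already been handled via Lemma \ref{lem:smallcomps}).
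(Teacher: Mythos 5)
Your proof is correct. It is the same averaging/pigeonhole idea as the paper's, but carried out at the level of components rather than individual bowties, and this is a small but genuine simplification. The paper takes the union $B'$ of all dense components, notes $|B'|\ge\beta n^3$, averages over centres to find $u_0$ with $|B'_{u_0}|\ge\beta n^2$, averages over the $\le n$ hyperedges through $u_0$ to find $T_0$ with $\ge\beta n$ bowties of the form $\{T_0,\cdot\}$ in $B'_{u_0}$, and only then uses the component-size cap $r^{10k^2}$ to argue that at least $r^{-10k^2}\beta n>2rk$ of these bowties lie in \emph{distinct} components. You instead assign each dense component a single representative bowtie and pigeonhole the $\ge\beta n^3$ components over the $<n^2$ incidence pairs $(u,T)$; distinctness of the resulting components is then automatic, and the bowties in distinct components are distinct, so the $T^0_\ell$ are distinct and $\ne T_0$ as required. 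Your observation that the size cap (and hence the factor $r^{10k^2}$ in the hypothesis on $n$) is unnecessary in your version is correct --- $n>2rk/\beta$ would suffice --- whereas the paper's route genuinely needs it in the last step; the paper retains the stronger hypothesis because the size cap is supplied for free by Lemmas~\ref{lem:densecompsgeneral} and~\ref{lem:manydensecomps} in any case.
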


\begin{proof}
	Suppose that $r$, $k$, $n$ and $\G$ are as stated above. Denote by $B'$ the union of all components of $B$ which are dense and have at most $r^{10k^2}$ vertices. By assumption, and since each component has at least one vertex, we have
	$$|B'|\geq \beta n^3\;.$$
	For a vertex $u\in V(\G)$ denote by $B'_u$ the set off all bowties in $B'$ whose centre is $u$. Since each bowtie has a unique centre, by averaging, there will be a vertex $u_0\in U$ such that
	$$|B'_{u_0}|\geq \beta n^2\;.
	$$
	Since $u_0$ belongs to at most $n$ hyperedges, by further averaging, there will be a hyperedge $T_0$ with $u_0\in T_0$ such that
	$$|\{b\in B'_{u_0}:b=\{T_0,T\} \}|\geq \beta n\;.
	$$
	Moreover, since, by assumption, each component in $B'$ is of size at most $r^{10k^2}$, at least $r^{-10k^2}\beta n>2rk$ of the above bowties will belong to distinct components.
\end{proof}
Lastly, the following lemma, which will be proved in the next section, will in conjunction with the lemmas established in this section, readily prove Theorems~\ref{thm:maingeneral} and~\ref{thm:main}.
\begin{lemma}\label{lem:mainweak}
	Suppose that $r\geq 3$, $\G$ is a linear $r$-graph, and let $B=B(\G)$ be its bowtie graph. Suppose that $k\geq 3$, and that there exist a vertex $u_0\in V(\G)$, a hyperedge $T_0\in E(\G)$ with $u_0\in T_0$, a set of $2rk$ further hyperedges $\T^0=\{T^0_1,\ldots ,T^0_{2rk}\}\subseteq E(\G)$, and a set of distinct dense components of $B$, $\C=\{C^1,\ldots, C^{2rk}\}$ such that, for each $\ell$, we have $u_0\in T_\ell^0$ and $\{T_0,T^0_\ell\}\in C^\ell$. Then $\G$ contains an $((r-2)k+3,k)$-configuration.
\end{lemma}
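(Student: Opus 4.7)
The plan is to grow the configuration edge-by-edge, maintaining an $((r-2)j + 3, j)$-configuration $\F_j \subseteq \G$ with $T_0 \in \F_j$ and $u_0 \in V(\F_j)$ for $j = 2, 3, \ldots, k$. I start with the $(2r-1, 2)$-configuration $\F_2 := T_0 \cup T^0_1$, and at each extension step $j$ I add a single hyperedge $T^* \in E(\G)$ with $|T^* \cap V(\F_{j-1})| \geq 2$ — exactly the condition needed to keep $|V(\F_j)| \leq (r-2)j + 3$. Allocating $2r$ fresh components per extension step consumes $2r(k-2) < 2rk$ components in total, well within the supply $\C = \{C^1, \ldots, C^{2rk}\}$.

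For a single extension step, I take $2r$ fresh components $C^{\ell_1}, \ldots, C^{\ell_{2r}}$ and examine the bowties $b_{\ell_i} = \{T_0, T^0_{\ell_i}\} \in C^{\ell_i}$. The first easy case is that $|T^0_{\ell_i} \cap V(\F_{j-1})| \geq 2$ for some $i$: set $T^* = T^0_{\ell_i}$. Otherwise $T^0_{\ell_i} \cap V(\F_{j-1}) = \{u_0\}$ for every $i$. By Proposition~\ref{prop:B}(3) each $C^{\ell_i}$ has all-even degrees, and since it is connected with positive average degree, $b_{\ell_i}$ has at least two neighbors inside $C^{\ell_i}$. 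Proposition~\ref{prop:B}(1) then supplies a hyperedge $S_{\ell_i}$ such that $\{T_0, T^0_{\ell_i}, S_{\ell_i}\}$ is a copy of ${\cal C}^r_3$, with $S_{\ell_i}$ meeting $T_0$ at some $w_{\ell_i} \in T_0 \setminus \{u_0\}$ and $T^0_{\ell_i}$ at some $v_{\ell_i} \in T^0_{\ell_i} \setminus \{u_0\}$. If some such $S_{\ell_i}$ satisfies $|S_{\ell_i} \cap V(\F_{j-1})| \geq 2$, set $T^* = S_{\ell_i}$.

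The main obstacle is the hard case in which, across all $2r$ fresh components and all neighbors of $b_{\ell_i}$ in $C^{\ell_i}$, both $T^0_{\ell_i}$ and every candidate $S_{\ell_i}$ meet $V(\F_{j-1})$ only at the single ``forced'' vertex ($u_0$ and $w_{\ell_i}$, respectively). Since $w_{\ell_i} \in T_0 \setminus \{u_0\}$ has only $r - 1$ possible values while $i$ ranges over $2r$ indices, pigeonhole yields at least three components sharing a common $w$, producing three copies of ${\cal C}^r_3$ through the pair $\{u_0, w\} \subseteq T_0$. Combining this with deeper walks inside these dense components (which, being dense, contain rich structure beyond the immediate neighborhood of $b_{\ell_i}$) should allow either a direct extension or a ``batch'' extension adding several edges at once while staying within the vertex budget. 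The delicate bookkeeping is that a naive ${\cal C}^r_3$ added through $\{u_0, w\}$ contributes two new edges but $2r - 3 = 2(r-2) + 1$ new vertices, exceeding the per-edge target by one; compensating requires exploiting coincidences such as $S_s \cap T^0_{\ell_{i_{s'}}} \neq \emptyset$ among the shared-centre triples, using the full denseness of the components. I expect the precise form of the induction hypothesis may need to be strengthened, for instance to track a reserve of ``well-placed'' components compatible with the current $\F_j$, in order to carry this analysis all the way to $j = k$.
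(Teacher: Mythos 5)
Your overall framing---grow $\F_j$ one hyperedge at a time with each new edge meeting the current configuration in at least two vertices, and draw on the reservoir of components when stuck---matches the skeleton of the paper's argument, and your two ``easy cases'' are fine. But the ``hard case'' you isolate is the generic situation, and your treatment of it is an acknowledged sketch with no concrete closing mechanism. You only ever look at the immediate neighbourhood of $b_{\ell_i} = \{T_0, T^0_{\ell_i}\}$ inside $C^{\ell_i}$, so the \emph{density} of $C^{\ell_i}$ (beyond the trivial ``degree $\geq 2$'') is never actually used; and pigeonholing on $w_{\ell_i}$ gives you three copies of ${\cal C}^r_3$ through $\{u_0, w\}$ which, as you note yourself, overshoot the vertex budget by one, after which you appeal to unspecified ``deeper walks'' and ``coincidences.'' Nothing in the proposal forces such a coincidence to exist.

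The missing idea---the strengthening of the induction hypothesis that you correctly anticipated would be needed---is the paper's notion of an \emph{inductive} configuration together with a quantitative sparsity invariant. The paper tracks a subconfiguration $\E_i \subseteq \F_i$ built by repeatedly attaching hyperedges that lie in a ${\cal C}^r_3$ and contribute $r-2$ degree-$1$ vertices, with $\E_i$ touching $\F_i \setminus \E_i$ only inside $T_0$, and maintains the invariant (P3) that the set $A_i$ of bowties of $C_i$ whose two hyperedges both lie in $\E_i$ satisfies $d_{avg}(B[A_i]) < 3(r-1)$. Because $C_i$ is \emph{dense}, i.e.\ $d_{avg}(C_i)\ge 3(r-1)$, (P3) forces $A_i \subsetneq V(C_i)$, so there is always a bowtie $b \in C_i \setminus A_i$ adjacent to $A_i$, which supplies a genuinely new hyperedge $T$ meeting $V(\E_i)$ in exactly two vertices $\{w_1,w_2\}$. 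This, not the neighbourhood of $b_{\ell_i}$, is the engine that drives every step; a fresh component and one $T^0_\ell$ are consumed only when $T$ intersects $V(\F_i)$ in three or more vertices, which pushes the configuration into the tighter ``Type~1'' form and lets you restart $\E$ in a new component. Preserving (P3) is the technical heart of the whole proof---it requires the triangle-decomposition identity $e(B[A_{i+1}]) - e(B[A_i]) = 3\,|V(M_1)\cap V(M_2)|$ and the bound $|V(M_1)\cap V(M_2)| \leq \tfrac{r-1}{2}(|M_1|+|M_2|)$---and nothing in your proposal plays this role or could substitute for it, so the induction cannot be carried to $j=k$ as written.
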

\begin{proof}[Proof of Theorem~\ref{thm:maingeneral} and Theorem~\ref{thm:main}]
	For $c\geq 3$, let $r_0(c)$ be as in Lemma~\ref{lem:densecompsgeneral}. For $r\geq r_0$ and $k\geq 3$ let $\beta:=\beta(c,r,k)$ be as Lemma~\ref{lem:densecompsgeneral}, and 
	$$n_0(c,r,k):=\max\{r/\alpha_c^2,n_0(c),2rk\cdot r^{10k^2}\beta^{-1}\}\;,$$
	where $\alpha_c$ and $n_0(c)$ are as in Lemma~\ref{lem:ramseymult}. In particular, for all $n>n_0$, given a $c$-colouring of a complete linear $r$-graph on a set $U$ of $n$ vertices, we can assume Setup $A_c(r)$ and the conditions of Lemma~\ref{lem:densecompsgeneral}.
	
	For $c=2$, $r\geq 4$ and $k\geq 3$, let $\eps:=\eps(r,k)$ be as in Lemma~\ref{lem:manydensecomps}, let $\beta:=r^{-30k^2}$, and let
	$$n_0(r,k):=\max\{10^6r/\eps, n_0(\eps), 2rk\cdot r^{10k^2}\beta^{-1}\}\;,$$
	where $n_0(\eps)$ is as in Lemma~\ref{lem:Goodman}. In particular, we can assume Setup $A_2(\eps,r)$ for all $n>n_0$ and any $2$-colouring of a complete linear $r$-graph on a set $U$ of $n$ vertices.
	
	Then, by Lemma~\ref{lem:densecompsgeneral} or Lemma~\ref{lem:manydensecomps} respectively, either $B$ has a component of size at least $r^{10k^2}$, in which case $\G$ contains an $((r-2)k+3,k)$-configuration by Lemma~\ref{lem:smallcomps}, or $B$ has at least $\beta n^3$ dense components, and we can assume that each of them is of size at most $r^{10k^2}$.
	
	In that case, by Lemma~\ref{lem:overlapcomps}, $\G$ will satisfy the conditions of Lemma~\ref{lem:mainweak}. Invoking it, we conclude that $\G$ contains an $((r-2)k+3,k)$-configuration.
\end{proof}

%

\subsection{Proofs of the technical lemmas}

\begin{proof}[Proof of Lemma~\ref{lem:9minuseps}]
    By Proposition~\ref{prop:B}(5),
	and using that $d_G(u)=(r-1)d_{\G}(u)$,
	we have
	\begin{align}\label{eq:TRG}
	\sum_{b\in B}d_B(b)&=\sum_{u\in U}\sum_{b\in B_u}d_B(b)=\sum_{u\in U}2(d_{\T(G)}(u)-\binom{r-1}{2}d_\G(u))\nonumber\\
	&=6T(G)-\sum_{u\in U}(r-2)d_G(u)=6T(G)-2(r-2)e(G)\nonumber\\
	&\geq 6T(G)-rn^2.
	\end{align}
	By assumptions of Setup $A_c(r)$ and by Lemma~\ref{lem:ramseymult} we get
	$$6T(G)\geq 6\alpha_c\sum_{u\in U}\binom{d_G(u)}{2}, \ \text{ and } \ rn^2<{\alpha_c^2} n^3 \leq {\alpha_c}\sum_{u\in U}\binom{d_G(u)}{2}\;,
	$$
	implying that
	\begin{equation}\label{eq:TRGc}
	6T(G)-rn^2\geq 5{\alpha_c}\sum_{u\in U}\binom{d_G(u)}{2}\;.
	\end{equation}
	Hence,
\begin{align*}
	d_{avg}(B)&=\frac{\sum_{b\in B}d_B(b)}{|B|}\stackrel{\eqref{eq:TRG}, \eqref{eq:TRGc}}{\geq} \frac{5\alpha_c\sum_{u\in U}\binom{d_G(u)}{2}}{|B|}\nonumber \\
	&\geq \frac{5\alpha_c\cdot (r-1)^2\sum_{u\in U}\binom{\frac{d_G(u)}{r-1}}{2}}{|B|}\stackrel{\eqref{eq:sizeofB}}{=}\frac{5\alpha_c\cdot (r-1)^2|B|}{|B|}>\alpha_cr^2\;.
\end{align*}
	Similarly, by assumptions of Setup $A_2({\eps}, r)$ and by Lemma~\ref{lem:Goodman} we get
	$$6T(G)\geq (1-6{\eps})\sum_{u\in U}\binom{d_G(u)}{2}, \ \text{ and } \ rn^2< \frac{\eps n^3}{10^6} \leq  {\eps}\sum_{u\in U}\binom{d_G(u)}{2}\;,
	$$
	implying
	\begin{equation}\label{eq:TRG2}
	6T(G)-rn^2\geq (1-7\eps)\sum_{u\in U}\binom{d_G(u)}{2}\;.
	\end{equation}
	Therefore,
	$$
	d_{avg}(B)=\frac{\sum_{b\in B}d_B(b)}{|B|}\stackrel{\eqref{eq:TRG},\eqref{eq:TRG2}}{\geq} \frac{(1-7\eps)\sum_{u\in U}\binom{d_G(u)}{2}}{|B|}\stackrel{\eqref{eq:sizeofB2}}{\geq}\frac{(1-7\eps)\cdot (r-1)^2|B|}{|B|}>(r-1)^2-7 r^2\eps\;.
	$$
\end{proof}
\noindent
\begin{proof}[Proof of lemma~\ref{lem:densecompsgeneral}]
	Set $r_0:=\lceil 6/\alpha_c\rceil +1$, where $\alpha_c$ is as in Lemma~\ref{lem:ramseymult}. Then for $r\geq r_0$, in Setup $A_c(r)$, by Lemma~\ref{lem:9minuseps}, we obtain	
	\begin{align*}
	6r&<{\alpha_cr^2}\leq d_{avg}(B)=\frac{\sum_{b\in B}d_B(b)}{|B|}=\frac{1}{|B|}\sum_{C}|C|\frac{\sum_{b\in C}d_C(b)}{|C|}=\sum_{C}\frac{|C|}{|B|}d_{avg}(C)\\
	&=\sum_{C:d_{avg}(C)\geq 3r}\frac{|C|}{|B|}d_{avg}(C)+
	\sum_{C:d_{avg}(C)< 3r}\frac{|C|}{|B|}d_{avg}(C).
	\end{align*}
	Let $B_1$ be the union of components of average degree at least $3r$, and note that all of these components are dense; let $B_2=B\setminus B_1$. Then, since by Proposition~\ref{prop:B}(3), $\Delta(B)\leq 2(r-1)^2$,
	$$6r< \sum_{C\subseteq B_1}2(r-1)^2\frac{|C|}{|B|}+\sum_{C\subseteq B_2}3r\frac{|C|}{|B|}=\frac{2(r-1)^2|B_1|+3r(|B|-|B_1|)}{|B|}\leq\frac{3r^2|B_1|+3r|B|}{|B|}\;,
	$$
	which, by Lemma~\ref{lem:basic}, gives
	$$|B_1|>\frac{|B|}{r}{\geq} \frac{\alpha_cn^3}{2r^3}\;.
	$$
	If no component in $B$ has more than $r^{10k^2}$ vertices, then the number of dense components in $B$ must be at least
	$$\frac{|B_1|}{r^{10k^2}}\geq \frac{\alpha_c}{2r^3\cdot r^{10k^2}}n^3=:\beta n^3\;.
	$$
\end{proof}
\begin{proof}[Proof of Lemma~\ref{lem:manydensecomps}]
	Let $f(r,k)=r^{10k^2}$ and $\eps={1}/({14r^4f(r,k)})$. In Setup $A_2(\eps, r)$,
	applying Lemma~\ref{lem:9minuseps}, 
	and	averaging over the components $C\subseteq B$, we get
	\begin{align*}
	(r-1)^2-\frac{1}{2r^2f(r,k)}&\leq d_{avg}(B)= \frac{\sum_{b\in B}d_B(b)}{|B|}=\frac{1}{|B|}\sum_{C}|C|\frac{\sum_{b\in C}d_C(b)}{|C|}\\
	&=\sum_{C}\frac{|C|}{|B|}d_{avg}(C).
	\end{align*}
	Suppose now that all components of $B$ are of size at most $f(r,k)$.
	Then, each $d_{avg}(C)$ is a rational number with denominator bounded above by $f(r,k)$. Hence, we can write
	\begin{align*}
	\sum_{C}\frac{|C|}{|B|}d_{avg}(C)
	&=\sum_{C:d_{avg(C)}\geq (r-1)^2}\frac{|C|}{|B|}d_{avg}(C)+\sum_{C:d_{avg(C)}\leq (r-1)^2-1/f(r,k)}\frac{|C|}{|B|}d_{avg}(C).
	\end{align*}
	\noindent
	Let $B_1$ be the union of components of average degree at least $(r-1)^2$, and note that since $r\geq 4$ all of these components are dense; let $B_2=B\setminus B_1$. Since by Proposition~\ref{prop:B}(3), $\Delta(B)\leq 2(r-1)^2$, we obtain
	\begin{align*}
	(r-1)^2-\frac{1}{2r^2f(r,k)}&\leq \sum_{C\subseteq B_1}2(r-1)^2\frac{|C|}{|B|}+\sum_{C\subseteq B_2}((r-1)^2-\frac{1}{f(r,k)})\frac{|C|}{|B|}\\
	&= \frac{2(r-1)^2|B_1|+((r-1)^2-1/f(r,k))(|B|-|B_1|)}{|B|}.
	\end{align*}
	A straightforward rearrangement yields
	$$|B_1|\geq \frac{|B|}{2r^2f(r,k)}\stackrel{\eqref{eq:sizeofB2}}{\geq} \frac{n^3}{4\cdot 10^{6}r^4 f(r,k)}\;.$$
	Since, by assumption, no component in $B_1$ has more than $f(r,k)$ vertices, the number of dense components in $B$ must be at least
	$$\frac{|B_1|}{f(r,k)}\geq \frac{n^3}{4\cdot 10^{6}r^4 f(r,k)^2}>r^{-30k^2}n^3\;,
	$$
	and the statement of the lemma follows.
\end{proof}
\section{Dense component exploration}\label{sec:mainlemma}
Our goal in this section is to formulate a strengthening of Lemma~\ref{lem:mainweak} that can be conveniently proved by induction. For this, we need first to define a special class of $((r-2)i+3,i)$-configurations, which can be viewed as natural analogues of $2$-uniform trees. For a hypergraph $\F$ and its subhypergraph $\E$ we write $\F\setminus \E$ to denote the hypergraph with the edge set $E(\F)\setminus E(\E)$, and the vertex set being the union of its edges. When $\E$ consists of a single edge $T$, we write $\F\setminus \{T\}$ for $\F\setminus \E$.

\begin{definition}\label{def:inductive}
	An $((r-2)i+3,i)$-configuration $\F$ is called \emph{inductive} if either $i=2$, or $i>2$ and there exists a hyperedge $T\in E(\F)$ such that:
	\begin{itemize}
		\item $T$ is contained in a ${\cal C}^r_3$,
		\item $T$ has $r-2$ vertices of degree $1$, and
		\item $\F\setminus \{T\}$ is inductive.
	\end{itemize}
\end{definition}
\noindent
The following lemma, which strengthens Lemma~\ref{lem:mainweak}, is the main technical result of the present paper.
\begin{lemma}\label{lem:main}
	Suppose that $r\geq 3$, $\G$ is a linear $r$-graph, and let $B=B(\G)$ be its bowtie graph. Suppose that  $k\geq 3$, and that there exist a vertex $u_0\in V(\G)$, a hyperedge $T_0\in E(\G)$ with $u_0\in T_0$, a set of $2rk$ further hyperedges $\T^0=\{T^0_1,\ldots ,T^0_{2rk}\}\subseteq E(\G)$, and a set of distinct dense components of $B$, $\C=\{C^1,\ldots, C^{2rk}\}$ such that, for each $\ell$, we have  $u_0\in T_\ell^0$ and $\{T_0,T^0_\ell\}\in C^\ell$.
	
	Then for each $2\leq i\leq k$ there exists an $((r-2)i+3,i)$-configuration $\F_i\subset \G$ of one of the following two types:
	\begin{itemize}
		\item [\textbf{Type~1.}] $\F_i$ is an $((r-2)i+2,i)$-configuration with $T_0\in E(\F_i)$.
		\item [\textbf{Type~2.}] There exist a subhypergraph $\E_i\subseteq \F_i$ and a component $C_i\in \C$ such that the following conditions hold.
		\begin{enumerate}
			\item[(P1)] $\E_i$ is an inductive $((r-2)j+3,j)$-configuration for some $j\geq 2$ with $T_0\in E(\E_i)$,
			\item[(P2)] $V(\E_i)\cap V(\F_i\setminus\E_i)\subseteq T_0$,
			\item[(P3)] The set $A_i=\{b\in V(C_i): b=\{T,S\}; T,S\in \E_i\}$ is not empty, and 
			$d_{avg}(B[A_i])<3(r-1)$.
		\end{enumerate}
	\end{itemize}
\end{lemma}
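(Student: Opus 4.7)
My plan is to prove the lemma by induction on $i$, from $i=2$ up to $k$. The base case sets $\F_2 = \E_2 = \{T_0, T_1^0\}$ and $C_2 = C^1$; this bowtie is an inductive $(2r-1,2)$-configuration with $j=2$, (P1) is clear, (P2) is vacuous since $\F_2\setminus\E_2=\emptyset$, and $A_2 = \{\{T_0, T_1^0\}\}$ induces no edges of $B$, so (P3) holds trivially.

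For the inductive step, suppose $\F_i$ is built. In Case~A, $\F_i$ is Type~1 with $(r-2)i+2$ vertices, leaving one vertex of slack. Among the $2rk$ reserve hyperedges $T_\ell^0$, call $T_\ell^0$ ``bad'' if it already lies in $\F_i$ or intersects $V(\F_i)$ in a vertex other than $u_0$; by linearity of $\G$, each $v \in V(\F_i) \setminus \{u_0\}$ lies in at most one $T_\ell^0$, so the count of bad indices is at most $i + (r-2)i + 1 < 2rk$ and some good $T_\ell^0$ survives. Set $\F_{i+1} = \F_i \cup \{T_\ell^0\}$, $\E_{i+1} = \{T_0, T_\ell^0\}$, $C_{i+1} = C^\ell$. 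Since $T_\ell^0 \cap V(\F_i) = \{u_0\} \subseteq T_0$, one has $|V(\F_{i+1})| = (r-2)(i+1)+3$; (P1) is immediate ($\E_{i+1}$ is an inductive bowtie containing $T_0$); (P2) is clear; and (P3) holds since $A_{i+1} = \{\{T_0, T_\ell^0\}\} \subseteq V(C^\ell)$ induces no edges.

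In Case~B, $\F_i$ is Type~2 with data $\E_i, C_i$. The density gap $d_{avg}(C_i) \geq 3(r-1) > d_{avg}(B[A_i])$ forces $A_i \subsetneq V(C_i)$, so by connectivity of $C_i$ some edge $\{b, b'\} \in E(C_i)$ satisfies $b \in A_i$, $b' \notin A_i$. Writing $b = \{T,S\}$ with $T,S \in \E_i$ and, after relabelling, $b' = \{T, S'\}$, Proposition~\ref{prop:B}(1) yields that $\{T,S,S'\}$ is a $\C^r_3$ and $S' \notin \E_i$. I attempt $\F_{i+1} = \F_i \cup \{S'\}$, $\E_{i+1} = \E_i \cup \{S'\}$, $C_{i+1} = C_i$. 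In the clean case $S' \notin \F_i$ with $S' \cap V(\F_i) = \{S'\cap T, S'\cap S\}$: $\E_{i+1}$ is inductive (the edge $S'$ sits in a $\C^r_3$ with $T,S\in\E_i$ and contributes $r-2$ degree-$1$ vertices), $|V(\F_{i+1})| = (r-2)(i+1)+3$, and (P2) is preserved via $V(S') \cap V(\F_i\setminus\E_i) \subseteq V(\E_i)\cap V(\F_i\setminus\E_i) \subseteq T_0$. Otherwise---either $S' \in \F_i \setminus \E_i$ or $S'$ meets $V(\F_i)$ in at least three vertices---we get $|V(\F_{i+1})| \leq (r-2)(i+1)+2$, a Type~1 configuration containing $T_0$.

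The main obstacle is preserving (P3) in the clean growth case: the set $A_{i+1}$ may strictly contain $A_i \cup \{b'\}$ because $\E_{i+1}$ can host further bowties lying in $V(C_i)$, and $d_{avg}(B[A_{i+1}])$ could reach or exceed $3(r-1)$. To handle this, I plan to exploit the abundance that such a failure entails: by Proposition~\ref{prop:B}(1) every edge of $B[A_{i+1}]$ exposes a $\C^r_3$ among edges of $\E_{i+1}$, and having many of them must force intersections among the $j+1$ hyperedges beyond what inductive growth would create, collapsing $|V(\E_{i+1})|$ below $(r-2)(j+1)+3$ and turning $\F_{i+1}$ into a Type~1 configuration (with $T_0$ still included). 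Quantifying this collapse so that the Type~1/Type~2 dichotomy closes up cleanly is the technical heart of the argument; iterating at most $k-2$ times then reaches $i=k$ and furnishes the desired $((r-2)k+3,k)$-configuration.
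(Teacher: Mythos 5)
Your outer structure matches the paper's proof: the same induction on $i$, the same base case, the same handling of a Type~1 configuration by grafting on a fresh $T^0_\ell$ to return to a small Type~2 bowtie, and the same exploitation of density of $C_i$ to find an unexplored bowtie adjacent to $A_i$ in the Type~2 case, with the dichotomy between ``the new hyperedge hits $V(\F_i)$ in $\geq 3$ vertices (Type~1)'' and ``it hits in exactly $2$ vertices (grow $\E_i$).'' Up to that point the arguments are essentially identical, modulo relabelling.

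The crucial part you leave open, however, is exactly the part that carries the weight of the lemma. You correctly observe that $A_{i+1}$ may acquire many more bowties than just $b'$, and you worry that $d_{\mathrm{avg}}(B[A_{i+1}])$ could reach $3(r-1)$. Your proposed escape route --- that such a failure would force enough extra intersections among the hyperedges of $\E_{i+1}$ to collapse $|V(\F_{i+1})|$ and yield a Type~1 configuration --- is not what happens, and I don't see how to make it work: in the clean case you have already fixed $|V(\F_{i+1})| = (r-2)(i+1)+3$ exactly, so there is no collapse available. The paper instead proves that (P3) \emph{always} survives the clean growth step. It partitions the new bowties as $A_{i+1} = A_i \cupdot A'_1 \cupdot A'_2$ (where $A'_j$ consists of bowties $\{S,T\}\in V(C_i)$ with $S\in\E_i$ and $w_j\in S$), observes that $B[A'_1]$ and $B[A'_2]$ are empty by Proposition~\ref{prop:B}(4), and then proves the key identity $e(B[A_i,A'_1])=e(B[A_i,A'_2])=e(B[A'_1,A'_2])=|V(M_1)\cap V(M_2)|$ by showing that $B[A_i,A'_1,A'_2]$ decomposes into edge-disjoint triangles, one per ``nice'' ${\cal C}^r_3$, and that these nice configurations biject with $V(M_1)\cap V(M_2)$. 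Together with $|V(M_1)\cap V(M_2)|\le\tfrac{r-1}{2}(|M_1|+|M_2|)$ this gives $e(B[A_{i+1}])<\tfrac{3(r-1)}{2}|A_{i+1}|$, i.e.\ (P3). Without this counting argument (or an equivalent), the induction does not close; this is the genuine gap in your proposal.

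A secondary, fixable issue: you lump ``$S'\in\F_i\setminus\E_i$'' into the Type~1 fallback, but in that event $\F_{i+1}=\F_i$ would have only $i$ hyperedges, so it is not an $(i+1)$-edge configuration at all. The paper rules this case out entirely: if $S'\in\F_i\setminus\E_i$, then both intersection vertices $w_1,w_2$ would lie in $V(\E_i)\cap V(\F_i\setminus\E_i)\subseteq T_0$ by (P2), forcing $S'=T_0\in\E_i$ by linearity, a contradiction. You need this observation (or one like it) before splitting into subcases.
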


Before giving the proof, let us explain the reasoning underpinning Lemma~\ref{lem:main}. 
Recall that in Lemma \ref{lem:smallcomps} we have seen that if a component $C$ of $B$ contains a long path
then we can find large $((r-2)i+3,i)$-configurations. A first attempt at proving Lemma \ref{lem:main} would then be
to use the fact that we have many components, in order to find many (potentially small) $((r-2)i+3,i)$-configurations
in each of these components, and then somehow merge them into a single large $((r-2)k+3,k)$-configuration. Note however that just taking a disjoint union
of them would not work, since it would not produce a $((r-2)k+3,k)$-configuration. So the idea behind the induction stated above is to use the fact that
these components are {\em dense} in order to devise a way in which these small $((r-2)i+3,i)$-configurations {\em can} be merged into a single
$((r-2)k+3,k)$-configuration. This is done as follows: at a ``typical'' step $i$, the process has a configuration of $\textbf{Type~2}$,
meaning that it is growing an inductive configuration ${\cal E}_i$
within a component $C_i$ (property (P1)). A crucial feature of inductive configurations is that they correspond to subgraphs of $B$ of small average degree (property\footnote{Although the only feature of (P3) we use in the proof is that it implies that $A_i$ is a proper subset of $C_i$, we maintain the stronger
(P3) since it is easier to track in the induction process.} (P3)).
Hence, there must be a vertex $b \in C_i \setminus A_i$ which has not been explored yet, implying that another hyperedge can be added to ${\cal E}_i$.
Now there are two cases. If the new edge creates a new inductive configuration then we still have a $\textbf{Type~2}$ configuration so we can continue growing ${\cal E}_i$ (Case 2.2 in the proof) . Otherwise (Case 2.1 in the proof) we get a denser configuration (i.e. of $\textbf{Type~1}$) so we can pay the cost of moving to a new configuration (with the help of (P2)), and then restart the process (Case 1 in the proof).
%
%

\begin{proof}
	
	We proceed by induction on $i$.
	
	\noindent
	\textbf{Base case $i=2$:}
	
	Set $\F_2:=\{T_0,T^0_1\}$. Then $\F_2$ is of Type~2 with $\E_2:=\F_2$ and $C_2:=C^1$,
	as $\E_2$ is an inductive $(2r-1,2)$-configuration with $T_0\in E(\E_2)$ (so (P1) holds), $V(\F_2\setminus \E_2)=\emptyset$ (so (P2) holds) and $A_2=\{b\}$, where $b=\{T_0,T^0_1\}$ (so (P3) holds).
	
	\noindent
	\textbf{Induction step $i\rightarrow i+1$:}
	
	\noindent
	\textbf{Case 1:} $\F_i$ is of Type~1.
	
	Since $|V(\F_i)|\leq (r-2)i+2\leq rk$ yet $|\T^0|=2rk$, there exists a $T_\ell^{0}\in \T^0$ satisfying $T_\ell^0\cap V(\F_i)=\{u_0\}$. Define $\F_{i+1}:=\F_{i}\cup \{T_{\ell}^0\}$. Then
	$$|V(\F_{i+1})|= |V(\F_{i})\cup T^0_{\ell}|\leq ((r-2)i+2)+r-1=(r-2)(i+1)+3\;,
	$$
	so $\F_{i+1}$ is an $((r-2)(i+1)+3,i+1)$-configuration. We claim $\F_{i+1}$ is of Type~2, with $\E_{i+1}:=\{T_0,T_{\ell}^0\}$ and $C_{i+1}:=C^{\ell}$. First, $T_0\in E(\E_{i+1})$ by definition, and $\E_{i+1}$ is an inductive $(2r-1,2)$-configuration, so (P1) holds. Second, by assumption on $T^0_{\ell}$, we have
	\begin{align*}
	V(\E_{i+1})\cap V(\F_{i+1}\setminus \E_{i+1})&= (T_0\cup T^0_\ell)\cap V(\F_i\setminus \{T_0\})\\
	&= (T_0 \cap V(\F_i\setminus \{T_0\})) \cup (T^0_\ell \cap V(\F_i\setminus \{T_0\}))\\
	&\subseteq T_0 \cup \{u_0\} = T_0,
	\end{align*}
	so (P2) holds. Third, $A_{i+1}=\{b\}$, where $b=\{T_0,T^0_{\ell}\}$, so (P3) holds.
	\medskip
	
	\noindent
	\textbf{Case 2:} $\F_i$ is of Type~2. By the induction hypothesis $\F_i$ satisfies (P1)--(P3) with some $\E_i$, $j$, $C_i$, and $A_i$ as stated therein.
	
	By (P3) we have $A_i\neq\emptyset$, and $d_{avg}(B[A_i])<3(r-1)$. Since $C_i$ is a dense component, $A_i$ must be a proper subset of $V(C_i)$, which means there is a vertex $b\in V(C_i)\setminus A_i$ adjacent to a vertex $b'\in A_i$. By Proposition~\ref{prop:B}(1), we can write $b=\{T,Q_1\}$ and $b'=\{Q_1,Q_2\}$, for some $Q_1,Q_2,T\in E(\G)$, forming a ${\cal C}^r_3$. Note that, since $b'\in A_i$, the definition of $A_i$ implies $Q_1,Q_2\in E(\E_i)$. Consequently, since $b\in V(C_i)\setminus A_i$ and $Q_1\in E(\E_i)$, we must have $T\notin E(\E_i)$. Denote by $w_1$ and $w_2$ the vertices given by $T\cap Q_1=\{w_1\}$ and $T\cap Q_2=\{w_2\}$. 
	
	Suppose that $T\in E(\F_i \setminus \E_i)$. Since $w_1\in Q_1\subseteq V(\E_i)$, and similarly for $w_2$, we would have
	$$\{w_1,w_2\}\subseteq V(\E_i)\cap V(\F_i \setminus \E_i)\stackrel{\text{(P2)}}{\subseteq} T_0\;.$$
	Since, by linearity of $\G$, $w_1$ and $w_2$ can be contained in at most one edge of $\G$, this would imply $T=T_0$, contradicting that $T\notin E(\E_i)$. Hence, $T\notin E(\F_i \setminus \E_i)\cup E(\E_i)=E(\F_i)$. 
	
	Set $\F_{i+1}:=\F_i\cup \{T\}$, that is, $E(\F_{i+1})=E(\F_i)\cup \{T\}$, and $V(\F_{i+1})=V(\F_i)\cup T$. By the above, $|E(\F_{i+1})| =i+1$. We now have two subcases.
	
	\textbf{Case 2.1:} $(T\setminus\{w_1,w_2\})\cap V(\F_i)\neq \emptyset$; equivalently, $|T\cap V(\F_i)|\geq 3$. We claim that in this case $\F_{i+1}$ is of Type~1. By (P1), $T_0\in \E_i \subseteq \F_i\subseteq \F_{i+1}$. To see that $\F_{i+1}$ is an $((r-2)(i+1)+2,i+1)$-configuration, observe that
	$$|V(\F_{i+1})|=|V(\F_i)\cup T|=|V(\F_i)|+|T|-|V(\F_i)\cap T|\leq (r-2)i+3+r-3=(r-2)(i+1)+2\;.$$
 This completes the induction step in this subcase.
	
	\textbf{Case 2.2:} $T \cap V(\F_i)= \{w_1,w_2\}$. 
	In this case
	$$|V(\F_{i+1})|=|V(\F_i)|+|T|-|T\cap V(\F_i)|\leq (r-2)i+3+r-2=(r-2)(i+1)+3\;,$$
	So, $\F_{i+1}$ is an $((r-2)(i+1)+3,i+1)$-configuration.
	We claim that $\F_{i+1}$ is of Type~2 via $\E_{i+1}:=\E_i\cup \{T\}$ and $C_{i+1}:=C_{i}$ (we henceforth omit the subscript, and write $C$).
	
	To verify (P1), note that
	\begin{itemize}
		\item $T$ is contained in $\{Q_1,Q_2,T\}\subseteq E(\E_{i+1})$, which is a ${\cal C}^r_3$,
		\item Each $w \in T\setminus\{w_1,w_2\}$ has degree $1$ in $\E_{i+1}$,
		as by assumption $(T\setminus \{w_1,w_2\})\cap V(\F_i)=\emptyset$, so
		$$d_{\E_{i+1}}(w)\leq d_{\F_{i+1}}(w)=1+d_{\F_{i}}(w)=1\;,
		$$
		and
		\item $\E_i$ is an inductive $((r-2)j+3,j)$-configuration by (P1) in the induction hypothesis.
	\end{itemize}
	Therefore, $\E_{i+1}$ is an inductive $((r-2)(j+1)+3,j+1)$-configuration. Also, by (P1) in the induction hypothesis, $T_0\in E(\E_i)\subseteq E(\E_{i+1})$. Thus (P1) holds.
	
	To verify (P2) note that
	\begin{align*}
	V(\E_{i+1})\cap V(\F_{i+1} \setminus \E_{i+1})&= V(\E_i \cup \{T\})\cap V((\F_i\cup \{T\}) \setminus (\E_i\cup \{T\}))\\
	&=(V(\E_i)\cup (T\setminus\{w_1,w_2\}))\cap V(\F_i\setminus \E_i)\\
	&\subseteq (V(\E_i)\cap V(\F_i\setminus \E_i))\cup ((T\setminus\{w_1,w_2\})\cap V(\F_i))\\
	&=V(\E_i)\cap V(\F_i\setminus \E_i)\subseteq T_0\;,
	\end{align*}
	where the last inclusion is by (P2) in the inductive hypothesis.
	
	It thus remains to verify (P3). To this end, note that since $\E_{i+1}=\E_i\cup \{T\}$, we have
	\begin{align*}
	A_{i+1}&=\{\{S_1,S_2\}\in V(C):S_1, S_2\in E(\E_i)\}\cup \{\{S,T\}\in V(C): S\in E(\E_i)\}\\
	&= A_{i}\cup \{\{S_1,T\}\in V(C): S_1\in E(\E_i), w_1\in S\}\cup \{\{S_2,T\}\in V(C): S_2\in E(\E_i), w_2\in S\}\\
	&=:A_i\cup A'_1 \cup A'_2\;,
	\end{align*}
	In particular, using (P3) in the induction hypothesis, $\emptyset\neq A_i\subseteq A_{i+1}$, so $A_{i+1}$ is not empty.
	Note also that the above union is disjoint, since no bowtie in $A_i$ contains $T$, and no hyperedge $S\in E(\E_i)$ contains both $w_1$ and $w_2$ (as $\{w_1,w_2\}\subset T$). Therefore,
	\begin{equation*}
	|A_{i+1}|=|A_i|+|A'_1|+|A'_2|\;,
	\end{equation*}
	and to complete the proof we need to show that
	\begin{equation}\label{eq:92}
	e(B[A_{i+1}])< \frac{3(r-1)}{2}(|A_i|+|A'_1|+|A'_2|)\;.
	\end{equation}
	Define $M_1=\{t\in\binom{V(\G)}{r-1}:\{t\cup \{w_1\}, T\}\in A'_1\}$ and define $M_2$ analogously. Note that
	\begin{equation}\label{eq:AiMi}
	|M_1|=|A'_1| \text{ and } |M_2|=|A'_2|\;.
	\end{equation}
	We claim that, crucially,
	\begin{equation}\label{eq:crucial}
	e(B[A_i,A'_1])=e([A_i,A'_2])=e(B[A'_1,A'_2])=|V(M_1)\cap V(M_2)|\;.
	\end{equation}
	To see that~\eqref{eq:crucial} would indeed imply~\eqref{eq:92}, observe that each bowtie $b_1=\{S,T\}\in A'_1$ has $S\cap T = \{w_1\}$ as its centre. Therefore, by Proposition~\ref{prop:B}(4), we have $e(B[A'_1])=0$, and, similarly, $e(B[A'_2])=0$. Hence,~\eqref{eq:crucial} would imply
	\begin{equation}\label{eq:indstep}
	e(B[A_{i+1}])- e(B[A_{i}])=e(B[A_i,A'_1,A'_2])=3 |V(M_1)\cap V(M_2)|\;.
	\end{equation}
	\noindent
	Since
	\begin{equation}\label{eq:minvsavg}
	|V(M_1)\cap V(M_2)|\leq \min\{|V(M_1)|,|V(M_2)|\}\leq \frac{|V(M_1)|+|V(M_2)|}{2}=\frac{r-1}{2}(|M_1|+|M_2|)\;,
	\end{equation}
	combining,~\eqref{eq:AiMi},~\eqref{eq:indstep},~\eqref{eq:minvsavg} and (P3) in the induction hypothesis we obtain
	\begin{align*}
	e(B[A_{i+1}])&\stackrel{\eqref{eq:indstep}}{=}e(B[A_i])+3|V(M_1)\cap V(M_2)|\stackrel{\eqref{eq:minvsavg}}{\leq} e(B[A_i])+\frac{3(r-1)}{2}(|M_1|+|M_2|)\\
	&\stackrel{\text{(P3)},\eqref{eq:AiMi}}{<}\frac{3(r-1)}{2}(|A_i|+|A'_1|+|A'_2|)\;.
	\end{align*}
	
	It remains to prove~\eqref{eq:crucial}. To this end, for two hyperedges $S_1, S_2 \in E(\E_i)$ with $w_1\in S_1$ and $w_2\in S_2$, call a ${\cal C}^r_3$, comprising three hyperedges $T,S_1,S_2$ \emph{nice} if, with $b:=\{S_1, S_2\}$, $b_1:=\{S_1, T\}$ and $b_2:=\{S_2, T\}$, we have
	$\{b,b_1,b_2\}\subseteq V(C)$. 
	
	Let $N$ denote the set of all nice configurations. We claim that the tripartite graph $B[A_i,A'_1,A'_2]$ can be partitioned into $|N|$ edge-disjoint triangles. Indeed, given a nice configuration $\{S_1,S_2,T\}$ we claim that
	\begin{itemize}
		\item[(1)] $b$, $b_1$ and $b_2$ belong to $A_i$, $A'_1$ and $A'_2$, respectively, and form a triangle in $B$,
		\item[(2)] Two nice configurations define two edge-disjoint triangles as above, and
		\item[(3)] Every edge of $B[A_i,A'_1,A'_2]$ belongs to one of the above triangles.
	\end{itemize}
	Property (1) is immediate from the definitions. To see (2) and (3), consider an edge in $B[A_i,A'_1,A'_2]$, e.g. between $b=\{S_1,S_2\}\in A_i$ and $b_1=\{S_1,T\}\in A'_1$ (the other options can be handled similarly). By definition of $A_i$ we have $S_1,S_2\in E(\E_i)$ and $b\in V(C)$. Moreover, since $b_1\in A'_1$ we have $w_1\in S_1$, and since $\{b,b_1\}\in E(B)$, Proposition~\ref{prop:B}(1) implies that $\{S_1,S_2,T\}$ is a ${\cal C}^r_3$,. This means $S_2\cap T= (V(\E_i)\cap T)\setminus \{w_1\}=\{w_2\}$, thus $w_2\in S_2$. Let $b_2:=\{S_2,T\}$, and note that $b,b_1$ and $b_2$ form a triangle in $B$, which, as $b\in V(C)$, implies $\{b,b_1,b_2\}\subseteq V(C)$. Hence, $\{S_1,S_2,T\}$ is a nice configuration. Furthermore, the correspondence between $\{S_1,S_2,T\}$ and $(b,b_1)=(\{S_1,S_2\}, \{S_1,T\})$ is unique.
	
	%
	%
We now claim that $|N|=|V(M_1)\cap V(M_2)|$. To see this, note that, by linearity of $\G$, each of $M_1$ and $M_2$ is a collection of disjoint $(r-1)$-sets in $V(\G)$, and each $t_1\in M_1$ and $t_2\in M_2$ intersect in at most one vertex.
Thus, each vertex $v =t_1\cap t_2$, with $(t_1,t_2)\in M_1\times M_2$ belongs to unique $t_1$ and $t_2$, and this defines uniquely a ${\cal C}^r_3$, consisting of the hyperedges $T, S_1, S_2$, where $S_1=t_1\cup \{w_1\}$ and $S_2=t_2\cup \{w_2\}$, as $S_1\cap S_2=\{v\}, S_1\cap T=\{w_1\}, S_2\cap T=\{w_2\}$, and $v,w_1,w_2$ are distinct. By definition of $M_1$  we have $b_1:=\{S_1,T\}\in A'_1$, so $S_1\in E(\E_i)$ and $b_1\in V(C)$, and, by the same reasoning, $S_2\in E(\E_i)$ and $b_2:=\{S_2,T\}\in V(C)$. Since $b_1$ and $b:=\{S_1,S_2\}$ are adjacent in $B$, we also have $b\in V(C)$.
Hence, $\{T,S_1, S_2\}$ is a nice configuration. Conversely, every nice configuration $\{T,S_1, S_2\}$ defines a vertex $v\in V(\G)$ by $\{v\}= S_1\cap S_2$, and it is easy to check that $v\in V(M_1)\cap V(M_2)$, and that the two mappings are inverse bijections.
	
The statements in the above two paragraphs establish~\eqref{eq:crucial} and thus complete the proof.
\end{proof}

\section*{Acknowledgement}

The authors would like to thank Rajko Nenadov and Benny Sudakov for helpful early discussions, and Lior Gishboliner for technical help producing this manuscript. We further thank an anonymous referee for their helpful comments.

\end{document}